\newcounter{noindnum}[subsection]
\renewcommand{\thenoindnum}{\alph{noindnum}}
\newcommand{\noindstep}{\refstepcounter{noindnum}{{\rm(}\thenoindnum}\/{\rm)} }
\newcommand{\stepzero}{\setcounter{noindnum}{0}}
\renewcommand{\phi}{\varphi}
\renewcommand{\epsilon}{\varepsilon}
\renewcommand{\emptyset}{\varnothing}
\newcommand{\bE}{\mathbf E}
\newcommand{\bG}{\mathbf G}
\newcommand{\bH}{\mathbf H}
\newcommand{\bT}{\mathbf T}
\newcommand{\bU}{\mathbf U}
\newcommand{\bu}{\mathbf u}
\newcommand{\bB}{\mathbf B}
\newcommand{\cA}{\mathcal A}
\newcommand{\cB}{\mathcal B}
\newcommand{\cE}{\mathcal E}
\newcommand{\cF}{\mathcal F}
\newcommand{\cG}{\mathcal G}
\newcommand{\cH}{\mathcal H}
\newcommand{\cO}{\mathcal O}
\newcommand{\cT}{\mathcal T}
\newcommand{\C}{\mathbb C}
\renewcommand{\P}{\mathbb P}
\newcommand{\A}{\mathbb A}
\DeclareMathOperator{\Rad}{Rad}
\DeclareMathOperator{\Gl}{Gl}
\DeclareMathOperator{\Id}{Id}
\DeclareMathOperator{\Aut}{Aut}
\DeclareMathOperator{\Iso}{Iso}
\DeclareMathOperator{\spec}{Spec}
\DeclareMathOperator{\Ker}{Ker}
\newcommand{\fm}{\mathfrak m}
\theoremstyle{plain}
\newtheorem{theorem}{Theorem}
\newtheorem*{conjecture}{Conjecture}
\newtheorem{proposition}{Proposition}[section]
\newtheorem{lemma}[proposition]{Lemma}
\newtheorem*{corollary*}{Corollary}
\newtheorem{corollary}[proposition]{Corollary}
\theoremstyle{definition}
\newtheorem{definition}[proposition]{Definition}
\theoremstyle{remark}
\newtheorem*{remarks*}{Remarks}
\newtheorem*{remark*}{Remark}
\newtheorem{construction}[proposition]{Construction}
\begin{document}

\title[Proof of Grothendieck--Serre conjecture]{Proof of Grothendieck-Serre conjecture
on principal bundles over regular local rings containing a finite field}

\keywords{Reductive group schemes; Principal bundles\newline
The author acknowledges support of the
RNF-grant 14-11-00456.}

\begin{abstract}
Let $R$ be a regular local ring, containing {\bf a finite field}. Let $\bG$ be a reductive group scheme over~$R$. We prove that a principal $\bG$-bundle over~$R$ is trivial, if it is trivial over the fraction field of $R$. In other words, if $K$ is the fraction field of $R$, then the map of non-abelian cohomology pointed sets
\[
  H^1_{\text{\'et}}(R,\bG)\to H^1_{\text{\'et}}(K,\bG),
\]
induced by the inclusion of $R$ into $K$, has a trivial kernel.

Certain arguments used in the present preprint do not work if the ring $R$ contains
a characteristic zero field.
In that case and, more generally, in the case when the regular local ring $R$ contains {\bf an infinite field} this result is proved in \cite{FP}.

\end{abstract}


\author{Ivan Panin}
\email{paniniv@gmail.com}
\address{Steklov Institute of Mathematics at St.-Petersburg, Fontanka 27, St.-Petersburg 191023, Russia}

\maketitle

\section{Introduction}

Assume that $U$ is a regular scheme, $\bG$ is a reductive $U$-group scheme. Recall that a $U$-scheme $\cG$ with an action of $\bG$ is called \emph{a principal $\bG$-bundle over $U$}, if $\cG$ is faithfully flat and quasi-compact over $U$ and the action is simple transitive, that is, the natural morphism $\bG\times_U\cG\to\cG\times_U\cG$ is an isomorphism, see~\cite[Section~6]{FGA}. It is well known that such a bundle is trivial locally in \'etale topology but in general not in Zariski topology. Grothendieck and Serre conjectured that $\cG$ is trivial locally in Zariski topology, if it is trivial generically. More precisely
\begin{conjecture}
Let $R$ be a regular local ring, let $K$ be its field of fractions. Let $\bG$ be a reductive group scheme over $U:=\spec R$, let $\cG$ be a principal $\bG$-bundle. If $\cG$ is trivial over $\spec K$, then it is trivial. Equivalently, the map of non-abelian cohomology pointed sets
\[
  H^1_{\text{\'et}}(R,\bG)\to H^1_{\text{\'et}}(K,\bG),
\]
induced by the inclusion of $R$ into $K$, has a trivial kernel.
\end{conjecture}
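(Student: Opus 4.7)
\section*{Proof proposal}

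The plan is to follow the standard geometric strategy for Grothendieck--Serre, with the specific modifications needed to cope with a finite residue field. First I would apply Popescu's desingularization theorem to write $R$ as a filtered colimit of smooth algebras $A_i$ over the prime field $\mathbb F_p$ contained in $R$. By a standard limit argument for reductive group schemes and principal bundles (both are finitely presented), the group $\bG$, the bundle $\cG$, and its generic trivialization all descend to some $A_i$. Thus it suffices to prove the statement when $R=\cO_{X,x}$ is the local ring of a smooth affine irreducible $\mathbb F_p$-variety $X$ at a closed point $x$, and $\bG$ is defined on a Zariski neighborhood of $x$.

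Next I would build a \emph{geometric presentation} (a ``nice triple'' in the sense of Panin--Stavrova--Vavilov) around $x$: a smooth morphism $q\colon X\to S$ to a smooth base $S$ of relative dimension one, together with a section $\Delta\colon S\to X$ through $x$ and a principal divisor $D\subset X$ containing the vanishing locus of the trivialization of $\cG$, such that $q|_D$ is finite and $\Delta(S)\cap D=\emptyset$ near $x$. After shrinking, one may further compactify the fibers to get a proper relative curve $\overline{X}\to S$ with a well-behaved section at infinity. Once this is in place, the bundle $\cG$ extends to a neighborhood $X^\circ$ of $\Delta(S)$, it is trivial on $X^\circ\setminus D$, and the task becomes to show that a $\bG$-bundle on such a relative curve, trivial away from a finite section, is trivial near $\Delta$. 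A Horrocks-type / patching argument reduces this in turn to a statement about $\bG$-torsors on $\A^1_S$ that are trivial at infinity, namely the unramifiedness/constancy of $H^1_{\text{\'et}}(\A^1_S,\bG)\to H^1_{\text{\'et}}(\mathbb G_{m,S},\bG)$ along the zero section.

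The main obstacle, and the reason the infinite-field proof of~\cite{FP} does not transfer verbatim, is the construction of the nice triple above. In the infinite-field case one chooses the projection $q$ and the auxiliary function cutting out $D$ by a generic linear combination, invoking a Bertini-type argument that requires the residue field to be infinite. Over a finite field this genericity must be replaced by one of two devices: either Gabber's presentation lemma in its finite-field form (using the Poonen--Gabber Bertini theorem over $\mathbb F_q$ to produce a hypersurface with the required transversality and finiteness properties after passing to sufficiently high degree), or a norm-trick in which one performs the construction after base change to two finite extensions $\mathbb F_{q^{\ell_1}}$, $\mathbb F_{q^{\ell_2}}$ of coprime degrees and descends the triviality of $\cG$ to $\mathbb F_q$ via Harder's norm principle for reductive group schemes and the coprimality of the degrees. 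I would pursue the first route: establish an $\mathbb F_q$-version of the nice-triple existence lemma by combining Poonen's Bertini with Gabber's technique, and then feed the output into the Horrocks-patching mechanism sketched above to conclude.
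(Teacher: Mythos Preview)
Your high-level plan (Popescu $\Rightarrow$ geometric case $\Rightarrow$ nice triple $\Rightarrow$ Horrocks-type statement on $\A^1$) is the right skeleton, and you have even named the correct finite-field tools: Poonen's Bertini and a coprime-degree extension trick. But you have mislocated the main obstacle.

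The construction of the nice triple over a finite field is indeed an issue, and it is resolved essentially the way you say (Poonen/Gabber Bertini); in the paper this is black-boxed as \cite[Thm.~1.1]{Pan1}. What you treat as routine --- the ``Horrocks-type / patching'' step showing that a $\bG$-bundle on $\A^1_U$ trivial away from a monic hypersurface becomes trivial along any section --- is in fact the place where the infinite-field argument of \cite{FP} genuinely breaks down and where the new work of this paper lies. The paper reduces that statement (Theorem~\ref{th:psv}) to a $\P^1_U$ statement (Theorem~\ref{MainThm2}) via a closed subscheme $Y\subset\A^1_U$, finite \'etale over $U$, with three properties: $\bG_Y$ is quasi-split, $Y$ avoids the bad locus $Z$, and $\mathrm{Pic}(\P^1_u - Y_u)=0$ for every closed point $u$. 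Over an infinite field one finds such a $Y$ by a generic-position Bertini argument; over a finite field this fails. The paper's Proposition~\ref{SchemeY} builds $Y$ by first using Poonen's Bertini on the scheme of Borel subgroups $\cB\to U$ (so that each closed fibre already has a rational point, using finiteness of the residue fields) to get a finite \'etale $Y'\to U$, and then passing to $Y''=(Y'\otimes_k k_1)\sqcup(Y'\otimes_k k_2)$ for two finite extensions of coprime degree and embedding $Y''$ into $\A^1_U$. The coprime degrees are used not for a Harder-type norm descent, but to force the fibrewise Picard groups $\mathrm{Pic}(\P^1_u - Y_u)$ to vanish; descent plays no role here. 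Once $Y$ exists, the henselization/gluing argument of \cite{FP} goes through.

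So: your proposal would stall after the nice triple. You need to say how you will prove the $\A^1_U$ Horrocks statement over finite residue fields, and in particular how you will produce the quasi-splitting \'etale cover $Y$ with vanishing fibrewise Picard. Your two tools are exactly the right ones, but they belong at this step, and the coprime-degree trick is arithmetical (kill a gcd) rather than a norm-principle descent.
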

The main result of this paper is a proof of this conjecture for regular semi-local domains $R$, containing {\bf a finite field}.
Our proof was inspired by the preprint
[FP], where the conjecture is proven for semi-local regular domains containing an {\bf infinite} field.
{\it Thus,
the conjecture holds for
semi-local regular domains containing a field}.

The proof in the present preprint uses
\cite[Thm.1.1]{Pan1},
\cite[Thm.1.0.1]{Pan2},
the key ideas of the paper
\cite{FP}
and a Bertini type theorem from
\cite{Poo}.

Our result implies that two principal $\bG$-bundles over $U$ are isomorphic, if they are isomorphic over $\spec K$ as proved in the next section. This result is new even for constant group schemes (that is, for group schemes coming from the ground field).

Recall that a part of the Gersten conjecture asserts that the natural homomorphism of $\mathrm K$-groups $\mathrm K_q(R)\to\mathrm K_q(K)$ is injective. Very roughly speaking, the Grothendieck--Serre conjecture is a non-abelian version of this part of the Gersten conjecture.

\subsection{History of the topic}
Here is a list of known results in the same vein, corroborating the Grothendieck--Serre conjecture.

\smallskip $\bullet$ The case, where the group scheme $\bG$ comes from an infinite ground field, is completely solved by J.-L.~Colliot-Th\'el\`ene, M.~Ojanguren, and M.~S.~Raghunatan in~\cite{C-TO} and \cite{R1,R2}; O.~Gabber announced a proof for group schemes coming from arbitrary ground fields.

\smallskip $\bullet$ The case of an arbitrary reductive group scheme over a discrete valuation ring or over a henselian ring is completely solved by Y.~Nisnevich in~\cite{Ni1}. He also proved the conjecture for two-dimensional local rings in the case, when $\bG$ is quasi-split in~\cite{Ni2}.

\smallskip $\bullet$ The case, where $\bG$ is an arbitrary reductive group scheme over a regular semi-local domain containing an infinite field,
was settled by R.~Fedorov and I.~Panin in~\cite{FP}.

\smallskip $\bullet$ The case, where $\bG$ is an arbitrary torus over a regular local ring, was settled by J.-L.~Colliot-Th\'{e}l\`{e}ne and J.-J.~Sansuc in~\cite{C-T-S}.

\smallskip $\bullet$ For some simple group schemes of classical series the conjecture is solved in works of the author, A.~Suslin, M.~Ojanguren, and K.~Zainoulline; see~\cite{Oj1}, \cite{Oj2}, \cite{PS}, \cite{OP2}, \cite{Z}, \cite{OPZ}.

\smallskip $\bullet$ Under an isotropy condition on $\bG$ and assuming that the ring contains an infinite field the conjecture is proved in a series of preprints~\cite{PSV} and~\cite{Pa2}.

\smallskip $\bullet$ The case of strongly inner simple adjoint group schemes of the types $E_6$ and $E_7$ is done by the second author,
V.~Petrov, and A.~Stavrova in~\cite{PPS}. No isotropy condition is imposed there, however it is supposed that the ring contains an infinite field.

\smallskip $\bullet$ The case, when $\bG$ is of the type $F_4$ with trivial $g_3$-invariant and the field is of characteristic zero, is settled by V.~Chernousov in~\cite{Chernous}; the case, when $\bG$ is of the type $F_4$ with trivial $f_3$-invariant and the field is infinite and perfect, is settled by V.~Petrov and A.~Stavrova in~\cite{PetrovStavrova}.

\subsection{Acknowledgments}


The author thanks A.Suslin for his interest to the topic of the present preprint.

\section{Main results}\label{Introduction}
Let $R$ be a commutative unital ring. Recall that an $R$-group scheme $\bG$ is called \emph{reductive},
if it is affine and smooth as an $R$-scheme and if, moreover,
for each algebraically closed field $\Omega$ and for each ring homomorphism $R\to\Omega$ the scalar extension $\bG_\Omega$ is
a connected reductive algebraic group over $\Omega$. This definition of a reductive $R$-group scheme
coincides with~\cite[Exp.~XIX, Definition~2.7]{SGA3-3}.
A~well-known conjecture due to J.-P.~Serre and A.~Grothendieck
(see~\cite[Remarque, p.31]{Se}, \cite[Remarque 3, p.26-27]{Gr1}, and~\cite[Remarque~1.11.a]{Gr2})
asserts that given a regular local ring $R$ and its field of fractions~$K$ and given a reductive group scheme $\bG$ over $R$, the map
\[
  H^1_{\text{\'et}}(R,\bG)\to H^1_{\text{\'et}}(K,\bG),
\]
induced by the inclusion of $R$ into $K$, has a trivial kernel. The following theorem, which is the main result of the present paper,
asserts that this conjecture holds,
provided that $R$ contains {\bf a finite field}.
If $R$ contains an infinite field, then the conjecture is proved in [FP].

\begin{theorem}\label{MainThm1}
Let $R$ be a regular semi-local domain containing {\bf a finite field}, and let $K$ be its field of fractions. Let $\bG$ be
a reductive group scheme over $R$. Then the map
\[
  H^1_{\text{\'et}}(R,\bG)\to H^1_{\text{\'et}}(K,\bG),
\]
\noindent
induced by the inclusion of $R$ into $K$, has a trivial kernel. In other words, under the above assumptions on $R$ and $\bG$, each principal $\bG$-bundle over $R$ having a $K$-rational point is trivial.
\end{theorem}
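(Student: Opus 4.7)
The plan is to mimic the Fedorov--Panin strategy from \cite{FP}, replacing the infinite-field Bertini arguments used there with the finite-field Bertini theorem of Poonen \cite{Poo}. The overall logic runs as follows. First, by Popescu's N\'eron--Popescu desingularisation theorem, any regular semi-local ring $R$ containing a finite field $k$ is a filtered colimit of smooth $k$-algebras. Standard limit arguments for reductive group schemes and for non-abelian $H^1$ allow us to replace $R$ by the semi-local ring $\cO_{X,\mathbf x}$ of finitely many closed points $\mathbf x=\{x_1,\dots,x_n\}$ on a smooth affine irreducible $k$-variety $X$. The given principal $\bG$-bundle $\cG$ then extends to some affine open $U\subset X$ containing $\mathbf x$, and the generic trivialisation extends to a smaller open $V\subset U$ whose complement $Y=U\setminus V$ is a proper closed subset of $U$ avoiding the generic point.

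Next, I would invoke the geometric presentation theorems \cite[Thm.~1.1]{Pan1} and \cite[Thm.~1.0.1]{Pan2}: after shrinking $U$, one constructs an essentially smooth morphism $p\colon U\to S$ to a smooth $k$-scheme $S$ of dimension $\dim U - 1$, such that $p|_{\mathbf x}$ is quasi-finite, the restriction $p|_{Y}$ is finite, and $U$ embeds as a locally closed subscheme of a smooth relative curve $\bar U/S$. Over an open $S^\circ\subset S$ containing $p(\mathbf x)$ this yields the usual ``equating'' picture of \cite{FP}: two sections of $\bar U/S^\circ$, one through $\mathbf x$ and one missing $Y$, together with a trivialisation of $\cG$ along the second section, reduce the proof to descending the bundle to $\cO_{X,\mathbf x}$ by applying Nisnevich's theorem \cite{Ni1} on the henselisation of $\bar U$ along a suitable subscheme of codimension one.

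The principal obstacle, and the only place where the finiteness of $k$ bites, is the construction of this auxiliary subscheme. In \cite{FP} a transversality/Bertini argument over the infinite base field produces a hyperplane through $\mathbf x$ that avoids a prescribed finite subset of $Y$ and cuts out a smooth relative curve. Over a finite field such hyperplanes simply may not exist, and one must replace them by hypersurfaces of sufficiently large degree. Here I would apply Poonen's Bertini theorem \cite{Poo} in the form allowing prescribed local conditions at finitely many closed points: this yields, for $d\gg 0$, a hypersurface section $H\subset \bar U$ of degree $d$ that passes through $\mathbf x$ with prescribed jets, meets $Y$ in a controlled finite set, and is smooth elsewhere, so that $H$ plays the role of the missing generic hyperplane.

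Once such an $H$ is produced, the proof concludes as in \cite{FP}: $\cG$ restricted to the henselisation of $\bar U$ along $H$ is trivial by the Nisnevich case, and the equating construction plus the presentation theorems \cite{Pan1}, \cite{Pan2} propagate this triviality to a trivialisation of $\cG$ over $\cO_{X,\mathbf x}$, hence over $R$. The hard part will be orchestrating the Poonen step so that the single hypersurface $H$ simultaneously satisfies \emph{all} the constraints imposed by the semi-local point set $\mathbf x$, by the bad locus $Y$, by the relative curve structure, and by the equating data inherited from \cite{Pan1,Pan2}; since Poonen's theorem permits finitely many independent local conditions together with a global density estimate, this reduces to a bookkeeping of those conditions rather than to an existence problem.
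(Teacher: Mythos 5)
Your proposal correctly identifies the outer shell of the argument (Popescu to reduce to the semi-local ring of finitely many closed points on a smooth affine $k$-variety, then the geometric presentation theorems from \cite{Pan1} and \cite{Pan2}, then Poonen's Bertini in place of the infinite-field Bertini of \cite{FP}), but it misplaces where and why Poonen's theorem is actually invoked, and it omits the crux of the finite-field case entirely.

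In the paper, \cite{Pan1} and \cite{Pan2} already produce, as a black box, a $\bG$-bundle $\cE_t$ over $\A^1_U$, a monic $h$, and a section $s$ with $s^*\cE_t=\cE$ and $\cE_t$ trivial on $(\A^1_U)_h$; those theorems work over finite base fields with no Bertini input needed here. The statement is then reduced to a purely $\P^1_U$-level gluing assertion (Theorem~\ref{MainThm2}), and it is \emph{only} at that stage that one needs a closed subscheme $Y\subset\A^1_U$, finite and \'etale over $U$, with three specific properties: (i) $\bG_Y$ is \emph{quasi-split}; (ii) $Y\cap Z=\emptyset$; (iii) $Pic(\P^1_u-Y_u)=0$ for each closed point $u$. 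Poonen's theorem is applied \emph{inside the Borel scheme} $\cB$ of $\bG$, not inside the compactified relative curve $\bar U$: over a finite residue field $k(u_i)$ the group $\bG_{u_i}$ is quasi-split by Lang's theorem, so each fibre of $\cB$ over $u_i$ has a rational point $b_i$, and Poonen's Bertini is used to cut out a subscheme $Y'\subset\cB$ \'etale over $U$ through the $b_i$'s. It is precisely this placement inside $\cB$ that buys quasi-splitness of $\bG_{Y'}$. Your picture of a hypersurface $H\subset\bar U$ ``playing the role of the missing generic hyperplane'' does not produce quasi-splitness and therefore cannot feed into the endgame.

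Two further ingredients that carry the finite-field difficulty are absent from your proposal. First, to upgrade $Y'$ to the required $Y\subset\A^1_U$ satisfying (ii) and (iii) one needs the device of Lemma~\ref{F1F2}: pass to the disjoint union $(Y'\otimes_k k_1)\amalg(Y'\otimes_k k_2)$ for two finite extensions $k_1,k_2/k$ of coprime, sufficiently large degree. Coprimality gives $Pic(\P^1_u-Y_u)=0$; large degree guarantees $Y\cap Z=\emptyset$ and that a closed $U$-embedding into $\A^1_U$ exists despite the scarcity of low-degree closed points of $\A^1_u$ over a finite field. Second, the reason the quasi-split condition matters is the last step: once $\bG_Y$ has a Borel pair $\bB^\pm$, the elementary-subgroup functor $\bE$ generated by the unipotent radicals $\bU^\pm$ restricts surjectively from $\dot Y^h$ to $\dot Y^h_\bu$ (since $\bU^\pm$ are vector bundles), which combined with $\bG_{y^h}(\dot y^h)=\bE(\dot y^h)\bG_{y^h}(y^h)$ (Gille) gives Proposition~\ref{alpha}, the decomposition $\gamma_\bu=\alpha|_{\dot Y_\bu^h}\cdot\beta_\bu|_{\dot Y_\bu^h}$ that lets one modify the gluing datum and trivialise $\cG$. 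Without the construction of $Y$ inside $\cB$ (and without (iii), which is what makes $\cG_u$ trivial over $\P^1_u-Y_u$ via Gille's Corollary~3.10(a)), this mechanism has no foothold, and the ``bookkeeping of Poonen's local conditions'' you defer to cannot be carried out in the form you describe. Your appeal to Nisnevich on a codimension-one henselisation and to an ``equating picture'' is closer to what happens inside the proofs of \cite{Pan1} and \cite{Pan2} themselves, which this paper takes as given.
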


Theorem~\ref{MainThm1} has the following
\begin{corollary*}
Under the hypothesis of Theorem~\ref{MainThm1}, the map
\[
  H^1_{\text{\'et}}(R,\bG)\to H^1_{\text{\'et}}(K,\bG),
\]
\noindent
induced by the inclusion of $R$ into $K$, is injective. Equivalently, if $\cG_1$ and $\cG_2$ are two principal bundles isomorphic over $\spec K$, then they are isomorphic.
\end{corollary*}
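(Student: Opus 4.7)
The plan is to deduce the corollary from Theorem~\ref{MainThm1} by the standard torsor-twisting trick that converts a ``trivial kernel'' statement in non-abelian cohomology into an injectivity statement. Given two principal $\bG$-bundles $\cG_1$ and $\cG_2$ over $\spec R$ that are isomorphic over $\spec K$, I would form the inner twist $\bG' := \Aut(\cG_1)$, the $R$-group scheme whose sections over an $R$-algebra $S$ are the $\bG_S$-equivariant automorphisms of $\cG_1|_S$. Since $\cG_1$ is locally trivial in the \'etale topology and $\bG$ is reductive, $\bG'$ is an \'etale-locally trivial inner form of $\bG$, hence again affine and smooth over $R$ with connected reductive geometric fibres; so $\bG'$ is a reductive $R$-group scheme in the sense recalled at the beginning of Section~\ref{Introduction}.

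Next, I would introduce the $R$-scheme $\cH := \Iso(\cG_1,\cG_2)$ of $\bG$-equivariant isomorphisms from $\cG_1$ to $\cG_2$, on which $\bG'$ acts on the right by precomposition. \'Etale-locally both $\cG_i$ are trivial and hence isomorphic, so $\cH$ is \'etale-locally a trivial $\bG'$-torsor; this exhibits $\cH$ as a principal $\bG'$-bundle over $\spec R$. The class $[\cH]\in H^1_{\text{\'et}}(R,\bG')$ is trivial precisely when $\cH$ admits a global section, that is, precisely when $\cG_1\cong\cG_2$ as principal $\bG$-bundles over $R$. By assumption $\cG_1|_K\cong\cG_2|_K$, so $\cH$ has a $K$-rational point and $[\cH]$ lies in the kernel of $H^1_{\text{\'et}}(R,\bG')\to H^1_{\text{\'et}}(K,\bG')$.

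The last step is to apply Theorem~\ref{MainThm1} to the reductive $R$-group scheme $\bG'$ (the hypotheses on $R$ are unchanged, and this is why it is essential that the main theorem was proved for \emph{arbitrary} reductive group schemes, not merely for a fixed one). It yields that the kernel in question is trivial, so $[\cH]=1$ and $\cG_1\cong\cG_2$ over $R$. Running this for every pair of classes with the same image in $H^1_{\text{\'et}}(K,\bG)$ shows that $H^1_{\text{\'et}}(R,\bG)\to H^1_{\text{\'et}}(K,\bG)$ is injective as a map of pointed sets. I do not expect any real obstacle here: the only nontrivial structural input beyond the main theorem is the fact that an inner twist of a reductive group scheme is again reductive, which is classical and immediate from the geometric-fibres definition recalled in Section~\ref{Introduction}.
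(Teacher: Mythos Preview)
Your proof is correct and follows essentially the same route as the paper: form the isomorphism scheme $\Iso(\cG_1,\cG_2)$, view it as a principal bundle under the inner twist $\Aut(\cG_i)$ (the paper uses $\Aut\cG_2$, you use $\Aut\cG_1$, which is immaterial), note it has a $K$-point, and apply Theorem~\ref{MainThm1} to that reductive group scheme. Your write-up simply makes explicit the points the paper leaves tacit, namely that the inner form is again reductive and that $\Iso(\cG_1,\cG_2)$ is indeed an \'etale-locally trivial torsor.
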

\begin{proof}
Let $\cG_1$ and $\cG_2$ be two principal $\bG$-bundles isomorphic over $\spec K$. Let $\Iso(\cG_1,\cG_2)$ be the scheme of isomorphisms. This scheme is a principal $\Aut\cG_2$-bundle. By Theorem~\ref{MainThm1} it is trivial, and we see that $\cG_1\cong\cG_2$.
\end{proof}

Note that, while Theorem~\ref{MainThm1} was previously known for reductive group schemes $\bG$ coming from the ground field
(an unpublished result due to O.Gabber), in many cases the corollary is a new result even for such group schemes.

For a scheme $U$ we denote by $\A^1_U$ the affine line over $U$ and by $\P^1_U$ the projective line over $U$. Let $T$ be a $U$-scheme. By a principal $\bG$-bundle over $T$ we understand a principal $\bG\times_UT$-bundle.

In Section~\ref{sect:redtopsv} we deduce Theorem~\ref{MainThm1} from the following result of independent interest (cf.~\cite[Thm.1.3]{PSV}).

\begin{theorem}\label{th:psv}
Let $R$ be the semi-local ring of finitely many closed points on an irreducible smooth affine variety over {\bf a finite field} $k$,
set $U=\spec R$. Let $\bG$ be a simple simply-connected group scheme over $U$ (see~\cite[Exp.~XXIV, Sect.~5.3]{SGA3-3} for the definition).
Let $\cE_t$ be a principal $\bG$-bundle over the affine line $\A^1_U=\spec R[t]$, and let $h(t)\in R[t]$ be a monic polynomial.
Denote by $(\A^1_U)_h$ the open subscheme in $\A^1_U$ given by $h(t)\ne0$ and assume that
the restriction of $\cE_t$ to $(\A^1_U)_h$ is a trivial principal $\bG$-bundle.
Then for each section $s:U\to\A^1_U$ of the projection $\A^1_U\to U$ the $\bG$-bundle $s^*\cE_t$ over $U$ is trivial.
\end{theorem}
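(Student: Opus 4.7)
The plan is to imitate the strategy of Fedorov--Panin~\cite{FP} for the analogous reduction statement over infinite fields, and to supply the ingredients that fail in the finite residue case by a careful application of Poonen's Bertini theorem~\cite{Poo}. Since $h$ is monic, the closed subscheme $V(h)\subset\A^1_U$ is finite over $U$, and so $\A^1_U\smallsetminus V(h)$ together with $\P^1_U\smallsetminus V(h)$ form a Zariski covering of $\P^1_U$. Using the given trivialization of $\cE_t$ on $(\A^1_U)_h$, I would Beauville--Laszlo patch $\cE_t$ with the trivial bundle on $\P^1_U\smallsetminus V(h)$ to produce a principal $\bG$-bundle $\bar\cE$ on $\P^1_U$ which restricts to $\cE_t$ on $\A^1_U$ and is trivial along the section at infinity $\infty_U\subset\P^1_U$.

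The next step is to produce, by means of Poonen's Bertini theorem, a monic polynomial $f(t)\in R[t]$ whose vanishing locus $D:=V(f)\subset\A^1_U$ is finite \'etale over $U$, disjoint from $V(h)$, and contains the image of the given section $s$. Over an infinite ground field one can simply choose a linear polynomial with controlled residues at the closed points of $U$; over a finite field, this simultaneous control at finitely many closed points of $\spec R$ together with the \'etaleness and disjointness conditions is exactly the kind of statement furnished by~\cite{Poo}. Because $D$ and $V(h)$ are disjoint, the bundle $\bar\cE$ is trivial on an open neighbourhood of $D$ in $\P^1_U$.

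At this point I would invoke the presentation-type theorems~\cite[Thm.~1.1]{Pan1} and~\cite[Thm.~1.0.1]{Pan2} to put $(\A^1_U, V(h)\sqcup D, \cE_t)$ into a form in which the Fedorov--Panin ``equating'' argument applies: the two trivializations of $\bar\cE$ (one at infinity and one along an open neighbourhood of $D\supset s(U)$) differ by an element of $\bG(\cO)$ for a suitable ring $\cO$, and after the standard manipulation on the relative projective line this forces $s^*\cE_t=s^*\bar\cE$ to be isomorphic to the restriction $\bar\cE|_{\infty_U}$, which is trivial by construction.

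The main obstacle is precisely the replacement of the ``generic linear polynomial'' step of~\cite{FP} by an application of~\cite{Poo}: one must verify that the combined incidence, \'etaleness and disjointness conditions at the finitely many closed points of $\spec R$ needed downstream can all be realized simultaneously by a single monic polynomial over the finite field $k$. Everything else is expected to transcribe from~\cite{FP} with only cosmetic modifications, once the Poonen-type input is in place.
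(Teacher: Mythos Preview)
Your outline misses the substantive step and misplaces several of the ingredients.

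In the paper, the proof of this theorem reduces it to Theorem~\ref{MainThm2}. One sets $Z:=\{h=0\}\cup s(U)$, extends $\cE_t$ to a bundle $\cG$ on $\P^1_U$ trivial on $\P^1_U-Z$ (this much you also do), and then constructs a closed subscheme $Y\subset\A^1_U$, finite \'etale over $U$ and \emph{disjoint from} $Z$, such that (a) $\bG_Y$ is \textbf{quasi-split}, and (b) $\mathrm{Pic}(\P^1_u-Y_u)=0$ for every closed $u\in U$. Theorem~\ref{MainThm2} then gives that $\cG$ is trivial on $\P^1_U-Y$; since $s(U)\subset Z$ is disjoint from $Y$, $s^*\cE_t$ is trivial. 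The Poonen input is used, via Proposition~\ref{SchemeY}, to cut a finite \'etale $Y'$ inside the \emph{scheme of Borel subgroups} $\cB$ of $\bG$ (so that $\bG_{Y'}$ is automatically quasi-split), not to find a divisor in $\A^1_U$ through $s(U)$. The Picard condition~(b) is then arranged by the base-change trick with two coprime field extensions (Lemma~\ref{F1F2}).

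Your proposal has the geometry reversed (you ask $D$ to \emph{contain} $s(U)$ rather than avoid it), and more importantly it never produces the quasi-splitness of $\bG$ along the auxiliary divisor. That condition is not cosmetic: it is exactly what drives the decomposition $\bG_\bu(\dot Y_\bu^h)=\bE(\dot Y_\bu^h)\bG_\bu(Y_\bu^h)$ in Lemma~\ref{nastia} and hence Proposition~\ref{alpha}, which is the actual ``equating'' step. Without a Borel over the divisor, there is no elementary subgroup $\bE$ to work with and the Fedorov--Panin mechanism does not start. Your invocation of \cite{Pan1} and \cite{Pan2} is also misplaced: those are used in the paper to reduce Theorem~\ref{MainThm1} \emph{to} the present statement (constructing the data $(\cE_t,h)$ and reducing to the simple simply-connected case), not inside its proof. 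As written, the step ``after the standard manipulation \ldots this forces $s^*\cE_t\cong\bar\cE|_{\infty_U}$'' hides the entire content of Theorem~\ref{MainThm2} and cannot be filled in from what you have set up.
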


The derivation of Theorem~\ref{MainThm1} from Theorem~\ref{th:psv} is based on
\cite[Thm.1.0.1]{Pan2} and~\cite[Thm.1.1]{Pan1}.

Let $Y$ be a semi-local scheme. We will call a simple $Y$-group scheme
quasi-split if its restriction to each connected component of $Y$ contains {\bf a Borel subgroup scheme}.

\begin{theorem}\label{MainThm2}
Let $R$, $U$, and $\bG$ be as in Theorem~\ref{th:psv}.
Let $Z\subset\P^1_U$ be a closed subscheme finite over $U$.
Let $Y\subset\P^1_U$ be a closed subscheme finite and
\'etale over $U$ and such that\\
(i) $\bG_Y:=\bG\times_UY$ is quasi-split, \\
(ii) $Y\cap Z=\emptyset$ and $Y \cap \{\infty\}\times U=\emptyset= Z \cap \{\infty\}\times U$, \\
(iii) for any closed point $u \in U$ one has $Pic(\P^1_u - Y_u)=0$, where $Y_u:=\P^1_u\cap Y$.\\
Let $\cG$ be a~principal $\bG$-bundle over
$\P^1_U$ such that its restriction to
$\P^1_U- Z$ is trivial.
Then the restriction of $\cG$ to
$\P^1_U-Y$ is also trivial.\\
In particular, the principal $\bG$-bundle $\cG$ is trivial locally for the Zarisky topology.
\end{theorem}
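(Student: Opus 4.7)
The plan is to trivialize $\cG$ on $V := \P^1_U - Y$ by patching it against the given trivialization on $W := \P^1_U - Z$. Since $V \cup W = \P^1_U$ by (ii), producing a trivialization of $\cG|_V$ will simultaneously yield the final assertion of Zariski-local triviality. First, because $Y \subset W$ (again by (ii)), the restriction $\cG|_Y$ is trivial, and smoothness of $\bG$ combined with Hensel's lemma lifts this to a trivialization of $\cG$ over the Henselization $X^h$ of $\P^1_U$ along $Y$. Comparing this Henselian trivialization with the one inherited from $W$ on $X^h \setminus Y \subset W$ produces a gauge cocycle $g \in \bG(X^h \setminus Y)$, and the triviality of $\cG|_V$ reduces to finding a factorization $g = g_h \cdot g_V$ with $g_h \in \bG(X^h)$ and $g_V \in \bG(V)$ (restricted to the common overlap).

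To produce such a factorization I would invoke hypothesis (i): fix a Borel $\bB_Y \subset \bG_Y$, lift it by Hensel (applied to the smooth scheme of Borels) to a Borel $\bB \subset \bG_{X^h}$ with maximal torus $\bT$, unipotent radical $\bU$, and opposite unipotent $\bU^-$. The target is a Bruhat/Birkhoff-style factorization through the big cell $\bU^- \cdot \bT \cdot \bU \subset \bG$: after translation by a Weyl representative defined over $Y$ (available thanks to quasi-splitness), one writes $g = u^- \cdot t \cdot u$ with $u^\pm \in \bU^\pm(X^h \setminus Y)$ and $t \in \bT(X^h \setminus Y)$. The unipotent pieces $u^\pm$ decompose across $X^h$ and $V$ by induction on the filtration of $\bU^\pm$ by copies of the additive group, using that $V$ is affine (since (ii) and (iii) force $Y$ to meet every closed fiber of $\P^1_U \to U$, making it a relatively ample divisor) and therefore $H^1(V, \mathbb{G}_a) = 0$.

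The torus factor $t$ is handled character-by-character: for each $\chi \in X^*(\bT)$, the element $\chi(t) \in \mathbb{G}_m(X^h \setminus Y)$ represents a line bundle $\cL_\chi$ on $\P^1_U$ trivial on $X^h$ and on $V \cap W$. Hypothesis (iii) gives that $\cL_\chi$ restricts to $0$ in $\mathrm{Pic}(\P^1_u - Y_u)$ at every closed point $u \in U$, and a standard semi-local descent (the Picard group of a scheme of finite type over a semi-local $U$ is detected by its closed fibers) upgrades this to triviality of $\cL_\chi|_V$, producing the required factorization of $t$. Assembling the factorizations of $u^\pm$, the Weyl representative, and $t$ yields the desired decomposition of $g$ and hence trivializes $\cG|_V$. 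The hardest parts will be (a) making the Bruhat step rigorous, since $g$ is not a priori in the big cell and one must combine a Nisnevich-local refinement with the Weyl representatives available over $Y$ to reduce to that case; and (b) the semi-local Picard argument for the torus factor, where the interplay of the affine semi-local structure of $V$ and condition (iii) must be applied with care.
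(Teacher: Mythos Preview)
Your reduction in the first paragraph is not correct, and this is where the argument breaks. You produce $g\in\bG(\dot Y^h)$ by comparing the Henselian trivialization $\tau_h$ on $Y^h$ with the given trivialization $\tau_W$ on $W=\P^1_U-Z$, and then claim that a factorization $g=g_h\cdot g_V$ with $g_h\in\bG(Y^h)$, $g_V\in\bG(V)$ would yield a trivialization of $\cG|_V$. But it does not: such a factorization only gives a trivialization $\tau_h\,g_h^{-1}$ on $Y^h$ and a trivialization $\tau_W\,g_V$ on $V\cap W$ (since $\tau_W$ lives on $W$ and $g_V$ on $V$), agreeing on $\dot Y^h$. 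Together these cover $W$, not $V$; you obtain no section of $\cG$ near $Z\subset V$. Already for $\bG=\mathbb G_m$ this fails: the double coset $\bG(Y^h)\backslash\bG(\dot Y^h)/\bG(V)|_{\dot Y^h}$ is $\mathrm{Pic}(\P^1_U)\cong\mathbb Z$, and hypothesis~(iii), which concerns $\mathrm{Pic}(\P^1_u-Y_u)$, does not kill this obstruction. So both the reduction and the torus step are broken.

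The paper's proof takes a genuinely different route that supplies exactly the missing control at~$Z$. One writes $\cG=\Gl(\cG',\phi)$ with $\cG'=\cG|_V$ and seeks $\alpha\in\bG(\dot Y^h)$ such that the \emph{new} bundle $\cG^{new}:=\Gl(\cG',\phi\circ\alpha)$ on $\P^1_U$ is trivial; since $\cG^{new}|_V=\cG'$, this gives the conclusion. Triviality of $\cG^{new}$ is checked by \cite[Thm.~9.6]{PSV}: it suffices that $\cG^{new}$ be trivial on each closed fibre $\P^1_u$ and trivial off $T=Y\cup Z$ (the latter holds because $\cG^{new}|_{\P^1_U-T}=\cG|_{\P^1_U-T}$). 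Hypothesis~(iii) is used only at the closed fibres, via \cite[Cor.~3.10(a)]{GilleTorseurs}, to show $\cG|_{\P^1_\bu-Y_\bu}$ is trivial. Hypothesis~(i) enters through the subgroup $\bE$ generated by $\bU^\pm$: the key factorization is the \emph{closed-fibre} statement $\bG_\bu(\dot Y^h_\bu)=\bE(\dot Y^h_\bu)\cdot\bG_\bu(Y^h_\bu)$ (Gille's Fait~4.3 and Lemme~4.5), combined with the surjectivity of $\bE(\dot Y^h)\to\bE(\dot Y^h_\bu)$ coming from $\bU^\pm$ being affine spaces. No global Birkhoff factorization on $V$ is needed or available; the passage through closed fibres and the [PSV] input is the essential mechanism you are missing.
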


The proof of this result is inspired by \cite[Thm.3]{FP}.




\subsection{Organization of the paper}
In Section~\ref{sect:redtopsv}, we reduce Theorem~\ref{MainThm1} to Theorem~\ref{th:psv}. In Section~\ref{sect:reducing},
we reduce Theorem~\ref{th:psv} to Theorem~\ref{MainThm2}. This reduction is based
on~\cite[Thm.1.0.1]{Pan2}, \cite[Thm.1.1]{Pan1},
on a theorem of D.~Popescu~\cite{P}
and on
Proposition \ref{SchemeY}.
The latter proposition is a new ingredient
comparing with respecting arguments from
\cite[Section 4]{FP}.

In Section~\ref{sect:proof2} we prove Theorem~\ref{MainThm2}.
We give an outline of the proof in Section~\ref{sect:outline}.
We use the technique of henselization.


In Section~\ref{sect:application} we give an application of Theorem~\ref{MainThm1}.

In the Appendix we recall the definition of henselization from~\cite[Section~0]{Gabber}.

\section{Reducing Theorem~\ref{MainThm1} to Theorem~\ref{th:psv}}\label{sect:redtopsv}
In what follows ``$\bG$-bundle'' always means ``principal $\bG$-bundle''.  Now we assume that Theorem~\ref{th:psv} holds. We start with the following particular case of Theorem~\ref{MainThm1}.

\begin{proposition}\label{pr:geometric}
Let $R$, $U=\spec R$, and $\bG$ be as in Theorem~\ref{th:psv}. Let $\cE$ be a principal $\bG$-bundle over $U$, trivial at the generic point of $U$. Then $\cE$ is trivial.
\end{proposition}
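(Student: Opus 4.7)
The plan is to use the geometric presentation theorem \cite[Thm.~1.1]{Pan1} together with the bundle-extension result \cite[Thm.~1.0.1]{Pan2} to replace $\cE$ by a $\bG$-bundle on $\A^1_U$ satisfying the hypotheses of Theorem~\ref{th:psv}; that theorem then produces the desired trivialization.

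First, since $\cE$ is trivial over $\spec K$, a $K$-rational section of the affine morphism $\cE\to U$ spreads, by finite presentation, to a section over some principal open $U_f=\spec R_f$ with $0\ne f\in R$; hence $\cE|_{U_f}$ is trivial (and $f$ is a nonzerodivisor since $R$ is a domain). Apply \cite[Thm.~1.1]{Pan1} to the smooth affine $k$-variety of which $R$ is the semi-local ring, its distinguished set of closed points, and the function $f$. This yields a \emph{nice triple} consisting of a smooth affine morphism $q\colon X\to U$ of relative dimension one, a section $\Delta\colon U\to X$ of $q$, and a function $\tilde f\in\Gamma(X,\cO_X)$ such that $\Delta^*\tilde f=f$, the closed subscheme $V(\tilde f)\subset X$ is finite over $U$, and $\Delta(U)$ is a clopen component of $V(\tilde f)$. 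Pulling $\cE$ back along $q$ produces a $\bG$-bundle $q^*\cE$ on $X$ whose restriction to the open subscheme $X_{\tilde f}$ is trivial, since it factors through $\cE|_{U_f}$; moreover $\Delta^*q^*\cE\cong\cE$.

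Feeding the nice triple $(q,\tilde f,\Delta)$ together with the bundle $q^*\cE$ into \cite[Thm.~1.0.1]{Pan2} produces a $\bG$-bundle $\cE_t$ on $\A^1_U$, a monic polynomial $h(t)\in R[t]$, and a section $s\colon U\to\A^1_U$ of the structure map such that $\cE_t|_{(\A^1_U)_h}$ is trivial and $s^*\cE_t\cong\cE$. Theorem~\ref{th:psv} now applies verbatim to $\cE_t$, $h$, and $s$, and yields that $s^*\cE_t$, and hence $\cE$, is trivial.

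The substantive content of the argument is concentrated in the two cited results, whose combined output is precisely the ``honest affine line'' input required by Theorem~\ref{th:psv}; the main point that has to be tracked carefully is that the polynomial $h$ provided by \cite[Thm.~1.0.1]{Pan2} is truly monic and that the triviality locus $(\A^1_U)_h$ matches the locus $X_{\tilde f}$ coming out of the geometric presentation---this compatibility is exactly what makes \cite[Thm.~1.0.1]{Pan2} the right bridge to Theorem~\ref{th:psv}, and is where the hypothesis that $R$ is geometric over a field enters in an essential way.
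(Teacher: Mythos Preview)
Your overall strategy---reduce to a $\bG$-bundle on $\A^1_U$ that is trivial away from the zero locus of a monic polynomial, then invoke Theorem~\ref{th:psv}---is exactly the paper's approach. The difference is in how the affine-line data is obtained and in how the citations are used.

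In the paper, \cite[Thm.~1.1]{Pan1} \emph{directly} furnishes the bundle $\cE_t$ on $\A^1_U$, the monic $h(t)\in R[t]$, and the section $s$ with $s^*\cE_t=\cE$ and $\cE_t|_{(\A^1_U)_h}$ trivial; there is no intermediate ``nice triple'' on a relative curve $X\to U$, and \cite[Thm.~1.0.1]{Pan2} is not invoked here at all. That second reference is used only later, in Proposition~\ref{pr:reductivegeometric}, and for an entirely different purpose: reducing from an arbitrary reductive $\bG$ to the semisimple simply-connected case. So your description of \cite[Thm.~1.0.1]{Pan2} as a device that converts a nice triple into affine-line data does not match how the paper uses it, and you are inserting a step the paper does not need.

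Mathematically your route (spread out to $U_f$, build a presentation over a relative curve, then straighten to $\A^1_U$) is the shape of arguments in earlier work such as \cite{PSV} and \cite{FP}; it is not wrong in spirit, but in the framework of the present paper it is redundant, and the specific attribution to \cite[Thm.~1.0.1]{Pan2} is inaccurate. The paper's proof is simply: quote \cite[Thm.~1.1]{Pan1} for the data $(\cE_t,h,s)$, then apply Theorem~\ref{th:psv}.
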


\begin{proof}
Under the hypothesis of the proposition, the following data are constructed in
\cite[Thm.1.1]{Pan1}:\\
\stepzero\noindstep a principal $\bG$-bundle $\cE_t$ over $\A^1_U$;\\
\noindstep a monic polynomial $h(t)\in R[t]$.\\
Moreover these data satisfies the following conditions:\\
(1) the restriction of $\cE_t$ to $(\A^1_U)_h$ is a trivial principal $\bG$-bundle;\\
(2) there is a section $s:U\to\A^1_U$ such that $s^*\cE_t=\cE$.

Now it follows from Theorem~\ref{th:psv} that $\cE$ is trivial.
\end{proof}

\begin{proposition}\label{pr:reductivegeometric}
Let $U$ be as in Theorem~\ref{th:psv}. Let $\bG$ be a reductive group scheme over $U$.
Let $\cE$ be a principal $\bG$-bundle over $U$ trivial at the generic point of $U$. Then $\cE$ is trivial.
\end{proposition}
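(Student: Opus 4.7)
The plan is to reduce Proposition~\ref{pr:reductivegeometric} to the simple simply-connected case already proved in Proposition~\ref{pr:geometric}. The reduction is a chain of standard non-abelian cohomology manipulations along the lines of \cite{FP} and possibly invoking \cite[Thm.~1.0.1]{Pan2} cited in the introduction.

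First, I would replace $\bG$ by a z-extension: choose a central extension
\[
1\to T\to\bH\to\bG\to 1
\]
with $T$ a quasi-trivial central torus and $\bH^{\mathrm{sc}}:=\bH^{\mathrm{der}}$ simply connected. Quasi-triviality of $T$ yields $H^1(-,T)=0$ via Hilbert~90 and Shapiro, while Grothendieck's purity for the Brauer group on the regular scheme $U$ gives $H^2(U,T)\hookrightarrow H^2(K,T)$. A diagram chase in the long exact sequence then lifts $\cE$ to an $\bH$-torsor $\cF$ whose generic fiber is trivial, so the problem reduces to $\bH$. Next, setting $T':=\bH/\bH^{\mathrm{sc}}$, the $T'$-torsor $\cF/\bH^{\mathrm{sc}}$ is generically trivial, hence trivial on $U$ by the Colliot-Th\'el\`ene--Sansuc theorem for tori over regular semilocal rings. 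A section of it compatible with the generic trivialization of $\cF$ yields a reduction $\cF'$ of $\cF$ to an $\bH^{\mathrm{sc}}$-torsor with $\cF'_K$ trivial. Finally, by the classification in \cite[Exp.~XXIV]{SGA3-3}, $\bH^{\mathrm{sc}}\cong\prod_i\Res_{U_i/U}\bG_i$ with $U_i\to U$ finite \'etale and $\bG_i$ simple simply connected; Shapiro's lemma decomposes $H^1(U,\bH^{\mathrm{sc}})$ as $\prod_i H^1(U_i,\bG_i)$, and each $U_i=\spec R_i$ is again the spectrum of a semilocal ring of finitely many closed points on a smooth affine variety over a finite field (extending the \'etale cover to the ambient variety), so Proposition~\ref{pr:geometric} applies factor by factor.

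The main obstacle I foresee is the second step: ensuring that the $\bH^{\mathrm{sc}}$-reduction of $\cF$ can be chosen to be generically trivial. Different reductions differ by elements of $T'(U)$, while matching the canonical generic reduction of $\cF_K$ requires a specified element of $T'(K)$ that need not lie in the image of $T'(U)\to T'(K)$. The remedy is to build extra flexibility into the z-extension of the first step, enlarging $T$ so that the obstruction class in $T'(K)/T'(U)$ is absorbed; the exact cohomology sequence combined with the vanishing of $H^1(-,T)$ for quasi-trivial $T$ then forces the desired generic triviality, completing the reduction.
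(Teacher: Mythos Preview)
Your strategy is exactly the one the paper uses, only spelled out: the paper invokes \cite[Thm.~1.0.1]{Pan2} for the reduction from reductive to semi-simple simply-connected (your z-extension plus passage to $\bH^{\mathrm{sc}}$) and then \cite[Section~9]{PSV} for the reduction to the simple simply-connected case (your Weil-restriction decomposition, where indeed each $U_i$ is again semi-local of the required geometric form---the paper flags this as the reason one must work semi-locally). The obstacle you identify in the second step is genuine and is precisely what those cited references take care of, so your outline is correct and faithful to the paper's argument.
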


\begin{proof}
Firstly, using~\cite[Thm.1.0.1]{Pan2}, we can assume that~$\bG$ is semi-simple and simply-connected. Secondly, standard arguments (see for instance~\cite[Section~9]{PSV}) show that we can assume that $\bG$ is simple and simply-connected. (Note that for this reduction it is necessary to work with semi-local rings.) Now the proposition is reduced to Proposition~\ref{pr:geometric}.
\end{proof}

\begin{proof}[Proof of Theorem~\ref{MainThm1}]
Let us prove a general statement first. Let $k'$ be {\bf a finite field}, $X$ be a $k'$-smooth irreducible affine variety, $\bH$ be a reductive group scheme over $X$. Denote by $k'[X]$ the ring of regular functions on $X$ and by $k'(X)$ the field of rational functions on $X$. Let $\cH$ be a principal $\bH$-bundle over $X$ trivial over $k'(X)$. Let $\mathfrak p_1,\dots,\mathfrak p_n$ be prime ideals in $k'[X]$, and let $\cO_{\mathfrak p_1,\dots,\mathfrak p_n}$ be the corresponding semi-local ring.
\begin{lemma}\label{lm:primemax}
The principal $\bH$-bundle $\cH$ is trivial over $\cO_{\mathfrak p_1,\dots,\mathfrak p_n}$.
\end{lemma}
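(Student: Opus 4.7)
The plan is to reduce to Proposition~\ref{pr:reductivegeometric} by enlarging the given prime ideals to closed points of $X$, using that the semi-local ring at finitely many maximal ideals has $\cO_{\mathfrak p_1,\dots,\mathfrak p_n}$ as a further localization.

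First, for each index $i$ I would choose a closed point $x_i$ of $X$ lying in the closed subset $V(\mathfrak p_i)\subset X$. Since $X$ is an affine variety over the field $k'$, it is a Jacobson scheme, so every non-empty closed subset (in particular $V(\mathfrak p_i)$, which contains the generic point of $V(\mathfrak p_i)$) contains a closed point of $X$. Let $\mathfrak m_i\subset k'[X]$ denote the maximal ideal corresponding to $x_i$; by construction $\mathfrak p_i\subset\mathfrak m_i$.

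Next, set $A:=\cO_{\mathfrak m_1,\dots,\mathfrak m_n}$, the semi-local ring of $X$ at the finitely many closed points $x_1,\dots,x_n$. This is precisely the kind of ring appearing in Proposition~\ref{pr:reductivegeometric}: it is the semi-local ring of finitely many closed points on the smooth irreducible affine variety $X$ over the finite field $k'$, hence in particular a regular semi-local domain whose field of fractions is $k'(X)$. Since $\cH$ is trivial over $k'(X)$ by hypothesis, its restriction to $\spec A$ is a principal $\bH$-bundle that is trivial at the generic point of $\spec A$. Proposition~\ref{pr:reductivegeometric} then yields that $\cH|_{\spec A}$ is trivial.

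Finally, since $\mathfrak p_i\subset\mathfrak m_i$ for every $i$, the multiplicative set $k'[X]\setminus\bigcup_i\mathfrak m_i$ is contained in $k'[X]\setminus\bigcup_i\mathfrak p_i$, so $\cO_{\mathfrak p_1,\dots,\mathfrak p_n}$ is a further localization of $A$. Triviality of a principal bundle is preserved under localization, so $\cH$ is trivial over $\cO_{\mathfrak p_1,\dots,\mathfrak p_n}$. The only place where anything subtle enters is the very first step, namely the existence of closed points inside each $V(\mathfrak p_i)$; this is where one needs the ground field $k'$ (finite, but any field would do) to invoke the Jacobson property of $X$. Everything else is a routine localization argument.
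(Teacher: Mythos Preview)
Your proof is correct and follows essentially the same route as the paper: choose maximal ideals $\mathfrak m_i\supset\mathfrak p_i$, apply Proposition~\ref{pr:reductivegeometric} to $\cO_{\mathfrak m_1,\dots,\mathfrak m_n}$, and then pass to the further localization $\cO_{\mathfrak p_1,\dots,\mathfrak p_n}$. The only remark is that you do not actually need the Jacobson property to find the $\mathfrak m_i$; the existence of a maximal ideal containing a given prime ideal holds in any commutative ring with unit.
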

\begin{proof}
For each $i=1,2,\ldots,n$ choose a maximal ideal $\mathfrak m_i\subset k'[X]$ containing $\mathfrak p_i$. One has inclusions of $k'$-algebras
\[
\cO_{\mathfrak m_1,\dots,\mathfrak m_n}\subset\cO_{\mathfrak p_1,\dots,\mathfrak p_n}\subset k'(X).
\]
By Proposition~\ref{pr:reductivegeometric} the principal $\bH$-bundle $\cH$ is trivial over $\cO_{\mathfrak m_1,\dots,\mathfrak m_n}$. Thus it is trivial over $\cO_{\mathfrak p_1,\dots,\mathfrak p_n}$.
\end{proof}

Let us return to our situation. Let $\fm_1,\ldots,\fm_n$ be all the maximal ideals of $R$.
Let $\cE$ be a $\bG$-bundle over $R$ trivial over the fraction field of $R$.
Clearly, there is a non-zero $f\in R$ such that $\cE$ is trivial over $R_f$.
Let $k$ be the prime field of $R$.
Note that $k$ is perfect. It follows from Popescu's theorem (\cite{P,Sw}) that~$R$ is a filtered inductive limit of smooth $k$-algebras $R_\alpha$.
Modifying the inductive system $R_\alpha$ if necessary, we can assume that each $R_\alpha$ is integral.
There are an index $\alpha$, a reductive group scheme $\bG_{\alpha}$ over $R_{\alpha}$,
a principal $\bG_{\alpha}$-bundle $\cE_{\alpha}$ over $R_{\alpha}$,
and an element $f_{\alpha }\in R_{\alpha}$ such that
$\bG=\bG_{\alpha}\times_{\spec R_{\alpha}}\spec R$, $\cE$ is isomorphic to
$\cE_{\alpha}\times_{\spec R_{\alpha}}\spec R$ as principal $\bG$-bundle,
$f$ is the image of $f_{\alpha}$ under the homomorphism
$\phi_{\alpha}: R_{\alpha}\to R$, $\cE_{\alpha}$ is trivial over $(R_{\alpha})_{f_{\alpha}}$.

For each maximal ideal $\mathfrak m_i$ in~$R$ ($i=1,\dots, n$) set $\mathfrak p_i=\phi_{\alpha}^{-1}(\mathfrak m_i)$.
The homomorphism $\phi_\alpha$ induces a homomorphism of semi-local rings $(R_{\alpha})_{\mathfrak p_1,\dots,\mathfrak p_n}\to R$.
By Lemma~\ref{lm:primemax} the principal $\bG_{\alpha}$-bundle $\cE_{\alpha}$ is trivial over
$(R_{\alpha})_{\mathfrak p_1,\dots,\mathfrak p_n}$. Whence the $\bG$-bundle $\cE$ is trivial over $R$.
\end{proof}


\section{Reducing Theorem~\ref{th:psv} to Theorem~\ref{MainThm2}}\label{sect:reducing}
Now we assume that Theorem~\ref{MainThm2} is true. Let $k$, $U$ and $\bG$ be as in Theorem~\ref{th:psv}.
Let $u_1,\ldots,u_n$ be all the closed points of $U$. Let $k(u_i)$ be the residue field of $u_i$. Consider the reduced closed subscheme $\bu$ of $U$,
whose points are $u_1$, \ldots, $u_n$. Thus
\[
 \bu\cong\coprod_i\spec k(u_i).
\]
Set $\bG_\bu=\bG\times_U\bu$. By $\bG_{u_i}$ we denote the fiber of $\bG$ over $u_i$;
it is a simple simply-connected algebraic group over $k(u_i)$.

\begin{proposition}
\label{SchemeY}
Let $Z\subset\A^1_U$ be a closed subscheme finite over $U$.
There is a closed subscheme $Y\subset\A^1_U$ which is \'etale and finite over $U$ and such that \\
(i) $\bG_Y:=\bG\times_UY$ is quasi-split, \\
(ii) $Y\cap Z=\emptyset$,\\
(iii) for any closed point $u \in U$ one has $Pic(\P^1_u - Y_u)=0$, where $Y_u:=\P^1_u\cap Y$.\\
(Note that $Y$ and $Z$ are closed in $\P^1_U$ since they are finite over $U$).
\end{proposition}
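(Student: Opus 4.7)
My plan is to construct $Y$ in two stages: first build a finite étale $U$-scheme $\tilde U$ over which $\bG$ becomes quasi-split and embed it into $\A^1_U$, then invoke a Bertini-type theorem over the finite field $k$ (as in \cite{Poo}) to ensure the avoidance condition (ii) and the fiberwise Picard condition (iii) while preserving (i).

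Since each residue field $k(u_i)$ is finite, Steinberg's theorem guarantees that the fiber $\bG_{u_i}$ is quasi-split over $k(u_i)$ and admits a $k(u_i)$-rational Borel subgroup. The scheme of Borels $\mathrm{Bor}(\bG)\to U$ is smooth and projective; a standard étale lifting argument applied to the semi-local base $U$ then produces a finite étale cover $\tilde U\to U$ together with a $\tilde U$-point of $\mathrm{Bor}(\bG)$, making $\bG_{\tilde U}$ quasi-split. By the primitive element theorem for finite étale extensions of a semi-local ring, $\tilde U \cong \spec R[t]/(f_0(t))$ for some monic separable polynomial $f_0\in R[t]$, which realizes $\tilde U$ as a closed subscheme of $\A^1_U$ finite étale over $U$.

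Condition (iii) unwinds via the localization sequence for the Picard group on $\P^1_u$: one has $\mathrm{Pic}(\P^1_u-Y_u)=\Z/d_u\Z$, where $d_u$ is the greatest common divisor of the $k(u)$-degrees of the closed points of $Y_u$, so (iii) amounts to $d_u=1$ for every closed point $u\in U$. To achieve this I would augment $\tilde U$ with one or more further finite étale $U$-subschemes of $\A^1_U$ over which $\bG$ is also quasi-split, chosen so that the combined fiber $Y_{u_i}$ contains closed points whose $k(u_i)$-degrees are coprime.

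To assemble these pieces into a single $Y\subset \A^1_U$ that is étale over $U$, disjoint from $Z$, and has the desired fiber-degree structure, I would invoke Poonen's Bertini-type theorem over $k$. In the infinite-field setting of~\cite[Section~4]{FP} the defining polynomials can simply be chosen generically; over a finite field this shortcut is unavailable, and Poonen's density statement supplies a monic polynomial in $R[t]$ whose reduction at each maximal ideal $\mathfrak m_i$ factors into distinct irreducibles of controlled degrees, and whose roots avoid both $Z$ and $\tilde U$. The main obstacle I foresee is precisely this last step: simultaneous control of (i), (ii), and (iii) at every closed fiber of $U$ over the finite field $k$, with no room for a naive genericity argument, is exactly what Poonen's theorem is designed to handle.
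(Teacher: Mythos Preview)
Your overall strategy has the right ingredients, but the roles you assign to them are inverted relative to what actually works, and this creates a genuine gap.

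In the paper, Poonen's Bertini theorem is applied to the \emph{Borel scheme} $\cB\to U$: one embeds $\cB$ in some $\P^N_U$ and slices by hypersurfaces through the chosen rational Borels $b_i\in\cB_{u_i}$ to obtain $Y'\subset\cB$ finite \'etale over $U$ with $b_i\in Y'_{u_i}$. This single step buys two things at once: (a) $\bG_{Y'}$ is quasi-split tautologically, since $Y'$ sits inside $\cB$; and (b) each fibre $Y'_{u_i}$ contains a $k(u_i)$-rational point. Everything after this is elementary: one base-changes $Y'$ to two extensions $k_1,k_2$ of $k$ of large coprime degree, so that the fibres of $Y''=(Y'\otimes_k k_1)\sqcup(Y'\otimes_k k_2)$ over $u_i$ contain points of coprime $k(u_i)$-degrees $[k_1:k]$ and $[k_2:k]$ (giving $d_{u_i}=1$), while the largeness forces $Y\cap Z=\emptyset$ by a degree comparison. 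The embedding of $Y''$ into $\A^1_U$ is a counting/CRT argument, not Poonen.

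Your sketch inverts this. You obtain $\tilde U$ by an unspecified ``standard \'etale lifting'' and then invoke Poonen at the end to produce a monic polynomial in $R[t]$ with controlled fibrewise factorisation avoiding $Z$ and $\tilde U$. There are two problems. First, Poonen's theorem is about smooth hypersurface sections of quasi-projective schemes over finite fields; it does not directly hand you polynomials over the semi-local ring $R$ with prescribed factorisation type at each $\mathfrak m_i$, and in any case the zero locus of such a polynomial would have no reason to carry a Borel of $\bG$, so condition~(i) would fail for that piece. Second, and more structurally, your $\tilde U$ need not have a $k(u_i)$-rational point in each closed fibre; without that, there is no clean mechanism to force the gcd of fibre-degrees to be~$1$ while keeping $\bG$ quasi-split on every component you add. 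The step you wave through as ``standard \'etale lifting'' is precisely the step that genuinely requires Poonen (finding a finite \'etale multi-section of the smooth projective $\cB\to U$ through prescribed closed-fibre points), while the step you attribute to Poonen is the elementary one.
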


\begin{proof}
For every $u_i$ in $\bu$ choose a Borel subgroup $\bB_{u_i}$ in $\bG_{u_i}$.
{\it The laller is possible since the fields $k(u_i)$ are finite.}
Let $\cB$ be the $U$-scheme of Borel subgroup schemes of $\bG$.
It is a smooth projective $U$-scheme (see~\cite[Cor.~3.5, Exp.~XXVI]{SGA3-3}).
The subgroup $\bB_{u_i}$ in $\bG_{u_i}$ is a $k(u_i)$-rational point~$b_i$ in the fibre of $\cB$ over the point $u_i$.
Using a variant of Bertini theorem
({\bf see \cite[Thm.1.2]{Poo}}),
we can find a closed subscheme $Y^{\prime}$ of $\cB$ such that
$Y^{\prime}$ is \'etale over $U$ and all the $b_i$'s are in $Y$
(take an embedding of $\cB$ into a projective space $\P^N_U$ and
intersect $\cB$  with appropriately chosen family of hypersurfaces containing the points $b_i$.
Arguing as in the proof of \cite[Lemma~7.2]{OP2},
we get a scheme $Y^{\prime}$ finite and \'etale over $U$).
For any closed point $u_i$ in $U$ the fibre $Y^{\prime}_{u_i}$ of $Y^{\prime}$ over $u_i$
contains a $k(u_i)$-rational point (it is the point $b_i$).

To continue the proof of the Proposition we need the following
\begin{lemma}
\label{F1F2}
Let $U$ be as in the Proposition.
Let $Z\subset\A^1_U$ be a closed subscheme finite over $U$.
Let $Y^{\prime} \to U$ be a finite \'{e}tale morphism such that
for any closed point $u_i$ in $U$ the fibre $Y^{\prime}_{u_i}$ of $Y^{\prime}$ over $u_i$
contains a $k(u_i)$-rational point. Then there are finite field extensions
$k_1$ and $k_2$ of the finite field $k$ such that \\
(i) the degrees $[k_1: k]$ and $[k_2: k]$ are coprime,\\
(ii) $k(u_i) \otimes_k k_r$ is a field for $r=1$ and $r=2$,\\
(iii) the degrees $[k_1: k]$ and $[k_2: k]$ are strictly greater than any of the degrees
$[k(z): k]$ , where $z$ runs over all closed points of $Z$,\\
(iv) there is a closed embedding of $U$-schemes
$Y^{\prime\prime}=((Y^{\prime}\otimes_k k_1) \coprod (Y^{\prime}\otimes_k k_2)) \xrightarrow{i} \A^1_U$,\\
(v) for $Y=i(Y^{\prime\prime})$ one has $Y \cap Z = \emptyset$,\\
(vi) for any closed point $u_i$ in $U$ one has
$Pic(\P^1_{u_i}-Y_{u_i})=0$.
\end{lemma}
To prove this Lemma note that it's easy to find field extensions $k_1$ and $k_2$
subjecting (i) to (iii). To satisfy (iv) it suffices to require that
for any closed point $u_i$ in $U$ and for $r=1$ and $r=2$
the number of closed points in
$Y^{\prime}_{u_i}\otimes_k k_r$
is the same as the number of closed points in
$Y^{\prime}_{u_i}$,
and to require that for any integer $n>0$ and any closed point $u_i$ in $U$
the number of points
$y \in Y^{\prime\prime}_{u_i}$
with
$[k(y): k(u_i)]=n$
is not more than the number of points
$x \in \A^1_{u_i}$
with
$[k(x): k(u_i)]=n$.
Clearly, these requirements can be satisfied,
which proves the item (iv).

The condition (v) holds for any closed $U$-embedding
$i: Y^{\prime\prime} \hookrightarrow \A^1_U$ from item (iv),
since the property (iii).
The condition (vi) holds since the property (i).

Now complete the proof of Proposition \ref{SchemeY}. Take the $U$-scheme
$Y^{\prime} \subset \bB$ as in the beginning of the proof.
This $U$-scheme $Y^{\prime}$ satisfies the assumption of Lemma
\ref{F1F2}. Take the closed subscheme $Y$ of $\A^1_U$ as in the item (v)
of the Lemma. For this $Y$ the conditions (ii) and (iii) of the Proposition are
obviously satisfied. The condition (i) is satisfied too, since already
it is satisfied for the $U$-scheme $Y^{\prime}$. The Proposition follows.

\end{proof}

\begin{proof}[Proof of Theorem~\ref{th:psv}]
Set $Z:=\{h=0\}\cup s(U)\subset\A^1_U$.
Clearly, $Z$ is finite over $U$.
Since the principal $\bG$-bundle $\cE_t$ is trivial over $(\A^1_U)_h$
it is trivial over $\A^1_U-Z$.
Note that $\{h=0\}$ is closed in $\P^1_U$ and finite over $U$ because $h$ is monic.
Further, $s(U)$ is also closed in $\P^1_U$ and finite over $U$ because it is a zero set of a degree one monic polynomial.
Thus $Z\subset\P^1_U$ is closed and finite over $U$.


Since the principal $\bG$-bundle $\cE_t$ is trivial over $(\A^1_U)_h$, and
$\bG$-bundles can be glued in Zariski topology,
there exists a principal $\bG$-bundle $\cG$ over $\P^1_U$ such that\\
\indent (i) its restriction to $\A^1_U$ coincides with $\cE_t$;\\
\indent (ii) its restriction to $\P^1_U-Z$ is trivial.

Now choose $Y$ in $\A^1_U$ as in Proposition
~\ref{SchemeY}.
Clearly, $Y$ is finite \'{e}tale over $U$ and closed in $\P^1_U$.
Moreover,
$Y \cap \{\infty\}\times U=\emptyset= Z \cap \{\infty\}\times U$
and
$Y\cap Z= \emptyset$.
Applying Theorem~\ref{MainThm2} with this choice of $Y$ and $Z$,
we see that the restriction of $\cG$ to $\P^1_U-Y$ is a trivial $\bG$-bundle.
Since $s(U)$ is in $\A^1_U-Y$
and $\cG|_{\A^1_U}$ coincides with $\cE_t$,
we conclude that $s^*\cE_t$ is a trivial principal $\bG$-bundle over $U$.
\end{proof}

\section{Proof of Theorem~\ref{MainThm2}}\label{sect:proof2}
We will be using notation from Theorem~\ref{MainThm2}.
Let $\bu$
be as in Section~\ref{sect:reducing}. For $u\in\bu$ set $\bG_u=\bG|_u$.

\begin{proposition}\label{pr:trivclsdfbr}
Let $\cE$ be a $\bG$-bundle over $\P^1_U$ such that $\cE|_{\P^1_u}$ is a trivial $\bG_u$-bundle for all $u\in\bu$.
Assume that there exists a closed subscheme $T$ of $\P^1_U$ finite over $U$ such that the restriction of $\cE$ to $\P^1_U-T$ is trivial
and $(\infty \times U) \cap T = \emptyset$. Then $\cE$ is trivial.
\end{proposition}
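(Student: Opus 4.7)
The plan is to combine Gabber-style henselization with a non-abelian Beauville--Laszlo patching. I would first set $T_\bu := T \cap \P^1_\bu$, which is a finite collection of closed points of $\P^1_U$ because $T \to U$ is finite and $\bu$ consists of the closed points of $U$. Form the Gabber henselization $\cX$ of $\P^1_U$ along $T_\bu$; since $T_\bu$ is a disjoint union of closed points, $\cX$ is affine---it is the disjoint union of spectra of henselizations of the local rings $\cO_{\P^1_U,t}$ for $t \in T_\bu$---so $(\cX, T_\bu)$ is an affine henselian pair.

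The first key step is to establish triviality of $\cE$ on $\cX$. The restriction $\cE|_{T_\bu}$ is trivial because $T_\bu \subset \P^1_\bu$ and by hypothesis $\cE|_{\P^1_u}$ is trivial for every $u \in \bu$. Applying the classical theorem on $H^1$ for affine henselian pairs with smooth affine coefficients (recalled in the Appendix following \cite{Gabber}) gives an isomorphism $H^1_{\text{\'et}}(\cX, \bG) \cong H^1_{\text{\'et}}(T_\bu, \bG)$, whence $\cE|_\cX$ is trivial. Since $\cX$ is the cofiltered limit of its \'etale neighborhoods in $\P^1_U$, spreading out produces an \'etale neighborhood $p \colon V \to \P^1_U$ of $T_\bu$ (so that $p^{-1}(T_\bu) \xrightarrow{\sim} T_\bu$) on which $\cE|_V$ is trivial.

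Next I would patch. The image $p(V)$ is Zariski open in $\P^1_U$, contains $T_\bu$, and is stable under generization, hence contains all of $T$ (every point of $T$ specializes to a closed point of $T$, which lies in $T_\bu$). Fix trivializations $\tau_V$ of $\cE|_V$ and $\tau_0$ of $\cE|_{\P^1_U \setminus T}$; their ratio on $V \times_{\P^1_U} (\P^1_U \setminus T)$ is a transition element $g$ of $\bG$ over that fibre product. The goal is to write $g = g_1 g_2$ with $g_1 \in \bG(\P^1_U \setminus T)$ and $g_2 \in \bG(V)$; modifying the two trivializations by these factors will then make them agree on the overlap and yield a global trivialization of $\cE$. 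The hypothesis $T \cap (\infty \times U) = \emptyset$ enters here: the section $\infty \times U$ lies in $\P^1_U \setminus T$, providing a distinguished base point at which to normalize the trivializations before decomposing.

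The main obstacle is this last decomposition of $g$ for a general reductive $\bG$---the non-commutative Horrocks / Beauville--Laszlo step. I expect the argument to follow the pattern of \cite[Thm.~3]{FP}: lift and approximate using the henselian structure of $V$ along $V \times_{\P^1_U} T_\bu$, then exploit the affineness of $\P^1_U$ minus a section together with the smoothness of $\bG$ to realize one factor on $V$ and the other on $\P^1_U \setminus T$. The rigidity furnished by the trivialization at infinity is what permits the decomposition to close; without it one could modify $g$ by an arbitrary element of $\bG(U)$ at infinity and lose control of the patching.
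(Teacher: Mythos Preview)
The paper's proof is a single sentence: it cites \cite[Thm.~9.6]{PSV}, using only the observation that $\cE|_{\infty\times U}$ is trivial (which follows because $\infty\times U\subset\P^1_U-T$). That external theorem already packages exactly the Horrocks-type statement you are trying to rebuild by hand.

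Your proposal takes a genuinely different route, but it is not a proof---it is a sketch with an explicitly acknowledged gap. A few concrete issues:

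\begin{itemize}
\item The Appendix does not contain the $H^1$-invariance statement for henselian pairs you invoke; it only recalls the \emph{definition} of henselization. The result you want exists, but you would have to cite it from elsewhere.
\item Your patching step is over an \'etale cover $\{V,\ \P^1_U-T\}$, not a Zariski cover. Even if you succeed in writing $g=g_1g_2$ and adjust the two trivializations to agree on $V\times_{\P^1_U}(\P^1_U-T)$, you must still check compatibility on $V\times_{\P^1_U}V$ off the diagonal. You never address this, and for a general \'etale neighborhood $V$ of $T_\bu$ there is no reason it holds.
\item Most seriously, you defer the factorization $g=g_1g_2$ to ``the pattern of \cite[Thm.~3]{FP}''. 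In this paper that theorem is Theorem~\ref{MainThm2}, whose proof \emph{uses} Proposition~\ref{pr:trivclsdfbr} (see Section~\ref{sect:properties_ii}). So appealing to that pattern here is circular.
\end{itemize}

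In short: the paper treats this proposition as a known input from \cite{PSV} and moves on; your attempt to reprove it from scratch both duplicates substantial prior work and, as written, does not close.
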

\begin{proof}
This follows from Theorem~9.6 of~\cite{PSV}, since $\cE|_{(\infty \times U)}$ is a trivial $\bG$-bundle.
\end{proof}

\subsection{An outline of the
proof of Theorem~\ref{MainThm2}}\label{sect:outline}
Our proof of this Theorem almost literally coincides with the proof of
\cite[Thm.3]{FP}. Our arguments are simpler at certain points.

An outline of the proof.

Denote by $Y^h$ the henselization of the pair $(\A_U^1,Y)$, it is a scheme over $\A_U^1$. Let $s:Y\to Y^h$ be the canonical closed embedding, see Section~\ref{sect:distinguishedLimit} for more details. Set $\dot Y^h:=Y^h-s(Y)$. Let $\cG'$ be a $\bG$-bundle over $\P^1_U-Y$. Denote by $\Gl(\cG',\phi)$ the $\bG$-bundle over $\P_U^1$ obtained by gluing $\cG'$ with the trivial $\bG$-bundle $\bG\times_U Y^h$ via a $\bG$-bundle isomorphism $\phi:\bG\times_U\dot Y^h\to\cG'|_{\dot Y^h}$.


Note that the $\bG$-bundle $\cG$ can be presented in the form $\Gl(\cG',\phi)$, where $\cG'= \cG|_{\P^1_U-Y}$. The idea is to show that

\vskip4pt
\newlength{\shortwidth}
\setlength{\shortwidth}{\textwidth}
\addtolength{\shortwidth}{-.5cm}

\noindent ($*$) \emph{\hfill \parbox{\shortwidth}{There is $\alpha\in\bG(\dot Y^h)$ such that
the $\bG_\bu$-bundle $\Gl(\cG',\phi\circ\alpha)|_{\P^1_\bu}$ is trivial (here $\alpha$ is regarded as
an automorphism of the $\bG$-bundle $\bG\times_U\dot Y^h$ given by the right translation by the element $\alpha$).}}

\vskip4pt

If we find $\alpha$ satisfying condition ($*$), then Proposition~\ref{pr:trivclsdfbr}, applied to $T=Y\cup Z$,
shows that the $\bG$-bundle $\Gl(\cG',\phi\circ\alpha)$ is trivial over $\P^1_U$.
On the other hand, its restriction to $\P^1_U-Y$ coincides with the $\bG$-bundle $\cG'=\cG|_{\P^1_U-Y}$.
\emph{Thus $\cG|_{\P^1_U-Y}$ is a trivial $\bG$-bundle}.

To prove ($*$) it suffices to show that\\
\indent (i) the bundle $\cG|_{\P^1_\bu-Y_\bu}$ is trivial;\\
\indent (ii) each element $\gamma_\bu\in\bG_\bu(\dot Y_\bu^h)$ can be written in the form
\[
 \alpha|_{\dot Y_\bu^h}\cdot\beta_\bu|_{\dot Y_\bu^h}
\]
for certain elements $\alpha\in\bG(\dot Y^h)$ and $\beta_\bu\in\bG_\bu(Y_\bu^h)$.

A realization of this plan in details is given below in the paper.

\subsection{Henselization of affine pairs}\label{sect:distinguishedLimit}
We will use the theory of henselian pairs and, in particular,
a notion of a henselization $A^h_I$ of a commutative ring $A$ at an ideal $I$ (see~Appendix and~\cite[Section~0]{Gabber}).
We refer to
\cite[subsection 5.2]{FP}
for the geometric counterpart.
Let $S=\spec A$ be a scheme and $T=\spec(A/I)$ be a closed subscheme.
Let $(T^h,\pi: T^h \to S,s: T \to T^h)$ be
\emph{the henselization of the pair $(S,T)$} (cf. Definition~\ref{def:henzelisation}).
By definition the scheme $T^h$ is {\it affine} and the composite morphism
$\pi \circ s: T \to S$ is the closed embedding $T \hookrightarrow S$.
{\it Recall} that the pair $(T^h,s(T))$ is henselian, which means that for any affine \'etale morphism $\pi:Z\to T^h$,
any section $\sigma$ of $\pi$ over $s(T)$ uniquely extends to a section of $\pi$ over $T^h$.
It is known that $\pi^{-1}(T)=s(T)$.

In the notation of~\cite[Section~0]{Gabber} we have $T^h=\spec A^h_I$, $\pi:T^h\to S$ is induced by the structure of $A$-algebra on $A^h_I$.

{\it Recall three properties of henselization of affine pairs}\\
\indent (i) Let $T$ be a semi-local scheme. Then the henselization commutes with restriction to closed subschemes.
In more details, if $S'\subset S$ is a closed subscheme,
then
there is a natural morphism
$(T\times_SS')^h\to T^h\times_SS'$.
This morphism is an isomorphism and the canonical section $s':T\times_SS'\to(T\times_SS')^h$ coincides under this identification with
\[
    s\times_S\Id_{S'}:T\times_SS'\to T^h\times_SS'.
\]
%

\indent (ii) If $T=\coprod_i T_i$ is a disjoint union, then $T^h=\coprod_i T_i^h$.\\
\indent (iii) If we replace in a pair $(S,T)$ the scheme $S$ by an \'{e}tale affine neighborhood of $T$, then the
$(T^h,\pi,s)$ {\it remains the same}. In more details,
given a pair $(S,T)$ as above we write temporarily
$(S^{hen}_T, \pi_{S,T}, s_{S,T})$ for $(T^h,\pi,s)$.
If $p: W \to S$ is an \'{e}tale morphism and $t: T \hookrightarrow W$ is such that $p \circ t: T \hookrightarrow S$ coincides
with the closed embedding $T$ into $S$,
then there is a canonical {\it isomorphism}
$\rho: W^{hen}_T \to S^{hen}_T$
of the $S$-schemes
$(W^{hen}_T, \pi_{W,T})$ and $(S^{hen}_T, \pi_{S,T})$
such that
$\rho \circ s_{W,T}=s_{S,T}$.


\subsection{Gluing principal $\bG$-bundles}
Recall that $U=\spec R$, where $R$ is the semi-local ring of finitely many closed points on an irreducible $k$-smooth affine variety over a finite field $k$.
Also, $\bG$ is a simple simply-connected group scheme over $U$, and
$Y$ is a closed subscheme of $\P_U^1$ finite and \'etale over $U$.

{\it We
will assume below in the preprint that} $Y\subset\A_U^1$
(as in the hypotheses of Theorem \ref{MainThm2}).
Let $(Y^h,\pi,s)$ be the henselization of the pair $(\A_U^1,Y)$
and let
$\dot Y^h=Y^h-s(Y)$ and let $in: \A_U^1 \hookrightarrow \P_U^1$ be the open inclusion.
\begin{proposition}\cite{FP}
\label{pr:affine}\stepzero
The schemes $Y^h$ and $\dot Y^h$ are affine.
\end{proposition}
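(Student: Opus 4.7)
The plan is to exhibit the defining ideal of $Y$ in $\A^1_U = \spec R[t]$ as principal, generated by a single monic polynomial $f$; once this is established, everything else follows formally. Set $A = R[t]$ and let $I \subset A$ be the ideal of $Y$. By the construction of the henselization of a pair recalled in Section~\ref{sect:distinguishedLimit}, $Y^h = \spec A^h_I$ is affine. The closed embedding $s \colon Y \hookrightarrow Y^h$ realizes $Y$ as $\pi^{-1}(Y)$ scheme-theoretically, so $s(Y) = V(I \cdot A^h_I)$ inside $Y^h$. If I can prove $I = (f)$ for a non-zero-divisor $f$, then $\dot Y^h = Y^h - V(f) = \spec\bigl((A^h_I)_f\bigr)$ is the principal open $D(f)$ inside an affine scheme, hence affine.

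The main step is therefore the principality of $I$. Since $Y \to U$ is finite \'etale and $U$ is connected (as $R$ is an integral domain), $B := R[t]/I = \Gamma(Y, \cO_Y)$ is a finitely generated projective $R$-module of some constant rank $d$. Over a semi-local commutative ring, finitely generated projective modules of constant rank are free (Bass), so $B \cong R^d$ as $R$-modules. Let $f(t) \in R[t]$ be the characteristic polynomial of the $R$-linear endomorphism of $B$ given by multiplication by $t$: it is monic of degree $d$, and by Cayley--Hamilton it lies in $I$. The resulting $R$-linear surjection $R[t]/(f) \twoheadrightarrow B$ is then a surjection between two free $R$-modules of rank $d$, and any surjective endomorphism of a finitely generated module over a commutative ring is injective; so the map is an isomorphism and $I = (f)$.

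To close the argument, $f$ is monic and therefore a non-zero-divisor in $R[t]$; because $R[t] \to A^h_I$ is flat (henselization of a Noetherian pair is flat), $f$ remains a non-zero-divisor in $A^h_I$. Hence $\dot Y^h = D(f) = \spec\bigl((A^h_I)_f\bigr)$ is genuinely affine, not merely a quasi-affine open.

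The only real obstacle is arranging that $I$ is principal; the rest is bookkeeping about henselizations of pairs. The Cayley--Hamilton device is what makes principality go through, and it is here that the semi-local hypothesis on $R$ (inherited from Theorem~\ref{th:psv}) is essential, via the freeness of the finite \'etale algebra $B$.
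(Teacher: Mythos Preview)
Your argument is correct. The paper does not supply its own proof of this proposition; it merely cites \cite{FP}, having already noted in Section~\ref{sect:distinguishedLimit} that $T^h$ is affine by construction. Your Cayley--Hamilton argument that the defining ideal of $Y$ in $R[t]$ is generated by a single monic polynomial $f$, whence $\dot Y^h = D(f) \subset \spec A^h_I$ is a principal affine open, is the natural route and is presumably what \cite{FP} does.

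Two minor remarks. First, the sentence invoking ``surjective endomorphism of a finitely generated module'' is slightly misapplied, since $R[t]/(f)\to B$ is not literally an endomorphism; but both sides are free of rank $d$, so the conclusion stands. Second, the final paragraph on flatness and non-zero-divisors is unnecessary: for any element $f$ of any ring $C$, the open $D(f)=\spec C_f$ is affine regardless of whether $f$ is a zero-divisor, so you may simply drop that step.
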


Let us make a general remark. Let $\cF$ be a $\bG$-bundle over a $U$-scheme $T$. By definition, a trivialization of $\cF$ is a $\bG$-equivariant isomorphism $\bG\times_UT\to\cF$. Equivalently, it is a section of the projection $\cF\to T$. If $\phi$ is such a trivialization and $f:T'\to T$ is a $U$-morphism, we get a trivialization $f^*\phi$ of $f^*\cF$. Sometimes we denote this trivialization by $\phi|_{T'}$. We also sometimes call a trivialization of $f^*\cF$ \emph{a trivialization of $\cF$ on $T'$}.





The main cartesian square we will work with is
\begin{equation}
\begin{CD}\label{eq:distinguishedLimit}
\dot Y^h @>>> Y^h\\
@VVV @VV{in\circ\pi}V\\
\P^1_U - Y @>>>\P^1_U.
\end{CD}
\end{equation}

Let $\cA$ be  the category  of pairs $(\cE,\psi)$, where $\cE$ is a $\bG$-bundle on $\P_U^1$, $\psi$ is a trivialization of $\cE|_{Y^h}:=(in\circ\pi)^*\cE$. A morphism between $(\cE,\psi)$ and $(\cE',\psi')$ is an isomorphism $\cE\to\cE'$ compatible with trivializations.

Similarly, let $\cB$ be the category of pairs $(\cE,\psi)$, where $\cE$ is a $\bG$-bundle on $\P_U^1-Y$, $\psi$ is a trivialization of $\cE|_{\dot Y^h}$.


Consider the restriction functor $\Psi:\cA\to\cB$.
\begin{proposition}\cite{FP}
\label{pr:gluing2}
    The functor $\Psi$ is an equivalence of categories.
\end{proposition}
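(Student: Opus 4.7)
The plan is to build a quasi-inverse $\Phi\colon\cB\to\cA$ by gluing $\bG$-bundles across the Cartesian square (\ref{eq:distinguishedLimit}). Concretely, given $(\cE',\psi')\in\cB$, I glue $\cE'$ on $\P^1_U-Y$ with the trivial bundle $\bG\times_U Y^h$ on $Y^h$ using the isomorphism $\psi'$ over $\dot Y^h$; the bundle $\Phi(\cE',\psi')$ on $\P^1_U$ will then come equipped with a tautological trivialization on the $Y^h$ side. Establishing that this gluing makes sense and is inverse to restriction is the content of the proposition.

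Step 1. Verify that the square (\ref{eq:distinguishedLimit}) is Cartesian. By the henselization property $\pi^{-1}(Y)=s(Y)$, the complement $\dot Y^h=Y^h-s(Y)$ is exactly the preimage of $\P^1_U-Y$ under $in\circ\pi$. Since $Y\subset\A^1_U$, nothing changes when we extend $in\circ\pi$ across $\infty$.

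Step 2. Realize $Y^h$ as a cofiltered limit $Y^h=\varprojlim_\alpha V_\alpha$, where each $V_\alpha\to\A^1_U$ is an affine \'etale neighborhood of $Y$ with $V_\alpha\times_{\A^1_U}Y\xrightarrow{\sim}Y$. For each such $\alpha$, the diagram
\[
\begin{CD}
V_\alpha-Y @>>> V_\alpha\\
@VVV @VVV\\
\P^1_U-Y @>>> \P^1_U
\end{CD}
\]
is a Nisnevich distinguished square on $\P^1_U$ (open immersion on the left, \'etale on the right, an isomorphism between the complements $Y$ in the two columns). Standard Nisnevich descent for affine smooth group schemes then gives an equivalence between $\bG$-bundles on $\P^1_U$ and triples consisting of a $\bG$-bundle on $\P^1_U-Y$, a $\bG$-bundle on $V_\alpha$, and a gluing isomorphism on $V_\alpha-Y$. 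The analogous statement for bundles plus a trivialization on $V_\alpha$ identifies bundles with prescribed trivialization on $V_\alpha$ and bundles on $\P^1_U-Y$ with a trivialization on $V_\alpha-Y$.

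Step 3. Pass to the limit over $\alpha$. Since $\bG$ is affine and of finite presentation over $U$, the groupoid of $\bG$-bundles and their isomorphisms on $Y^h$ (resp.\ on $\dot Y^h$) is the filtered colimit of the corresponding groupoids on $V_\alpha$ (resp.\ on $V_\alpha-Y$); the same holds for trivializations, which are sections of an affine finitely presented scheme. Combining this limit with the descent of Step 2 produces the functor $\Phi$ and shows that $\Phi\circ\Psi$ and $\Psi\circ\Phi$ are naturally isomorphic to the identity, since on the $Y^h$-side $\Phi$ places the trivial bundle there by construction, and restricting back to $\P^1_U-Y$ returns $\cE'$ with its original trivialization on $\dot Y^h$.

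The main technical obstacle is not the formal gluing but the compatibility of principal $\bG$-bundle descent with the cofiltered limit $Y^h=\varprojlim V_\alpha$; this step is precisely where one needs that $\bG$ is of finite presentation over $U$ (so that bundles, morphisms, and trivializations are all of finite presentation and commute with the limit), and that the $V_\alpha$ are affine so that the distinguished square (\ref{eq:distinguishedLimit}) is obtained as an honest limit of Nisnevich distinguished squares. The remaining verifications are formal manipulations with the $2$-categorical gluing of groupoids.
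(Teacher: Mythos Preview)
The paper does not supply its own proof of this proposition; it is simply recorded with the citation \cite{FP} and used as input. So there is nothing in the present text to compare against directly.

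Your sketch is essentially the argument one expects (and presumably the one carried out in \cite{FP}): write $Y^h$ as the filtered limit of affine \'etale neighbourhoods $V_\alpha$ of $Y$ in $\A^1_U$, observe that each resulting square over $\P^1_U$ is an elementary Nisnevich (distinguished) square, apply descent for $\bG$-torsors along such squares to get the equivalence at each finite level, and then pass to the limit using that $\bG$, hence $\bG$-bundles and their isomorphisms and sections, are of finite presentation. The one place where the word ``standard'' in Step~2 deserves to be unpacked is the descent step itself: that the groupoid of $\bG$-torsors satisfies descent for elementary Nisnevich squares is not purely formal---it uses that $\bG$ is smooth and affine over $U$, so that \'etale-locally trivial $\bG$-torsors are already Nisnevich-locally trivial, and so that fpqc (or faithfully flat) descent for affine schemes handles the gluing of the torsors and of their morphisms. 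With that point supplied, Steps~1--3 go through and the argument is correct.
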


\begin{construction}\cite{FP}
\label{equally_well}
By Proposition~\ref{pr:gluing2} we can choose a functor quasi-inverse to $\Psi$. Fix such a functor $\Theta$. Let $\Lambda$ be the forgetful functor from $\cA$ to the category of $\bG$-bundles over $\P_U^1$. For $(\cE,\psi)\in\cB$ set
\[
    \Gl(\cE,\psi)=\Lambda(\Theta(\cE,\psi)).
\]
Note that $\Gl(\cE,\psi)$ comes with a canonical trivialization over $Y^h$.

Conversely, if $\cE$ is a principal $\bG$-bundle over $\P_U^1$ such that its restriction to $Y^h$ is trivial, then $\cE$ can be represented as $\Gl(\cE',\psi)$, where $\cE'=\cE|_{\P_U^1-Y}$, $\psi$ is a trivialization of $\cE'$ on $\dot Y^h$.
\end{construction}

Let $\bu$ be as in Section~\ref{sect:reducing}, $Y_\bu:=Y\times_U\bu$.
Let $(Y_\bu^h,\pi_\bu,s_\bu)$ be the henselization of $(\A^1_\bu,Y_\bu)$.
Using property~(i) of henselization, we get $Y_\bu^h=Y^h\times_U\bu$.
Thus we have a natural closed embedding $Y_\bu^h\to Y^h$.
Set $\dot Y_\bu^h=Y_\bu^h-s_\bu(Y_\bu)$.
We get a closed embedding
\begin{equation}
\label{dotYdotYh}
\dot Y_\bu^h\hookrightarrow \dot Y^h.
\end{equation}
Thus the pull-back of the cartesian square
(\ref{eq:distinguishedLimit})
by means of the closed embedding $\bu\hookrightarrow U$ has the form
\[
\begin{CD}
\dot Y_\bu^h @>>>Y_\bu^h\\
@VVV @VV in_\bu\circ\pi_\bu V\\
\P^1_\bu - Y_\bu @>>>\P^1_\bu,
\end{CD}
\]
where $in_\bu:\A_\bu^1\to\P_\bu^1$.

Similarly to the above, we can define categories $\cA_\bu$ and $\cB_\bu$ and an equivalence of categories
$\Psi_\bu:\cA_\bu\to\cB_\bu$.
Let $\Theta_\bu$ be a functor quasi-inverse to $\Psi_\bu$ and $\Lambda_\bu$ be the forgetful functor from $\cA_\bu$
to the category of $\bG_\bu$-bundles over $\P_\bu^1$.
Let $\cE_\bu$ be a principal $\bG_\bu$-bundle over $\P^1_\bu-Y_\bu$ and $\psi_\bu$ be a trivialization of $\bG_\bu$ on $\dot Y_\bu^h$.
Set
\[
\Gl_\bu(\cE_\bu,\psi_\bu)=\Lambda_\bu(\Theta_\bu(\cE_\bu,\psi_\bu)).
\]
\begin{lemma}\cite{FP}
\label{basechange_limit}
Let $(\cE,\psi)\in\cB$, and let $\Gl(\cE,\psi)$ be the $\bG$-bundle obtained by Construction~\ref{equally_well}. Then
\[
 \Gl_\bu(\cE|_{\P^1_\bu-Y_\bu},\psi|_{\dot Y_\bu^h}) \ \text{and} \ \Gl(\cE,\psi)|_{\P^1_\bu}
\]
are isomorphic as $\bG_\bu$-bundles over $\P^1_\bu$.
\end{lemma}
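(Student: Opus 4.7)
The plan is to trace the definition of $\Gl$ through the cartesian squares and use the fact that $\Psi$ and $\Psi_\bu$ are (quasi-)inverse equivalences to pin down the bundle up to isomorphism.

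First I would set up the restriction functors. The closed embedding $\bu\hookrightarrow U$ induces closed embeddings $\P^1_\bu\hookrightarrow\P^1_U$, $\P^1_\bu-Y_\bu\hookrightarrow\P^1_U-Y$, and, via Property~(i) of henselization already recorded in the preprint, $Y^h_\bu\hookrightarrow Y^h$ and $\dot Y^h_\bu\hookrightarrow\dot Y^h$; the pullback of the cartesian square (\ref{eq:distinguishedLimit}) along $\bu\hookrightarrow U$ is precisely the analogous cartesian square for~$\bu$. In particular, pulling back along $\P^1_\bu\hookrightarrow\P^1_U$ defines restriction functors $\cA\to\cA_\bu$ and $\cB\to\cB_\bu$, and these restrictions commute strictly with $\Psi$ and $\Psi_\bu$.

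Next I would unwind the construction of $\Gl$. Write $\Theta(\cE,\psi)=(\widetilde\cE,\widetilde\psi)$, so $\widetilde\cE$ is a $\bG$-bundle on $\P^1_U$ equipped with a trivialization $\widetilde\psi$ on $Y^h$, and $\Gl(\cE,\psi)=\widetilde\cE$. Because $\Psi\circ\Theta\simeq\Id_\cB$, the canonical isomorphism $\widetilde\cE|_{\P^1_U-Y}\cong\cE$ identifies $\widetilde\psi|_{\dot Y^h}$ with $\psi$. Restricting to $\P^1_\bu$ gives an object $(\widetilde\cE|_{\P^1_\bu},\widetilde\psi|_{Y^h_\bu})\in\cA_\bu$, and applying $\Psi_\bu$ to it yields precisely $(\cE|_{\P^1_\bu-Y_\bu},\psi|_{\dot Y^h_\bu})\in\cB_\bu$ by the commutativity observed above together with the equality $\widetilde\psi|_{\dot Y^h_\bu}=\psi|_{\dot Y^h_\bu}$.

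Now I would apply $\Theta_\bu$, a chosen quasi-inverse to $\Psi_\bu$. Since $\Theta_\bu\circ\Psi_\bu\simeq\Id_{\cA_\bu}$, we get
\[
  \Theta_\bu\bigl(\cE|_{\P^1_\bu-Y_\bu},\psi|_{\dot Y^h_\bu}\bigr)\;\cong\;\bigl(\widetilde\cE|_{\P^1_\bu},\widetilde\psi|_{Y^h_\bu}\bigr)
\]
as objects of $\cA_\bu$. Applying the forgetful functor $\Lambda_\bu$ to both sides gives the desired isomorphism $\Gl_\bu(\cE|_{\P^1_\bu-Y_\bu},\psi|_{\dot Y^h_\bu})\cong\widetilde\cE|_{\P^1_\bu}=\Gl(\cE,\psi)|_{\P^1_\bu}$ of $\bG_\bu$-bundles on $\P^1_\bu$.

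Almost all of the argument is formal bookkeeping; the only genuinely substantive ingredient is the fact that the formation of the cartesian square (\ref{eq:distinguishedLimit}) commutes with pullback along $\bu\hookrightarrow U$. This, in turn, rests on Property~(i) of henselization of affine pairs (the identification $(Y\times_U\bu)^h=Y^h\times_U\bu$), which is already available in the preprint. Consequently I do not anticipate a serious obstacle; the main care needed is to verify that the trivialization data on $Y^h$ produced by $\Theta$ restricts compatibly, so that the object $(\widetilde\cE|_{\P^1_\bu},\widetilde\psi|_{Y^h_\bu})$ is a well-defined object of $\cA_\bu$ whose image under $\Psi_\bu$ is exactly $(\cE|_{\P^1_\bu-Y_\bu},\psi|_{\dot Y^h_\bu})$.
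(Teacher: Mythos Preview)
Your argument is correct and is exactly the natural one: restrict the object $\Theta(\cE,\psi)\in\cA$ to $\P^1_\bu$, observe that applying $\Psi_\bu$ recovers $(\cE|_{\P^1_\bu-Y_\bu},\psi|_{\dot Y_\bu^h})$ (up to the isomorphism coming from $\Psi\Theta\simeq\Id_\cB$), and then use $\Theta_\bu\Psi_\bu\simeq\Id_{\cA_\bu}$ before forgetting the trivialization. The paper itself does not give a proof of this lemma; it simply cites~\cite{FP}, and your argument is precisely the kind of formal verification one expects there, with Property~(i) of henselization (the identification $Y_\bu^h=Y^h\times_U\bu$) as the only nonformal input.

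One small quibble of phrasing: the square of restriction functors does not commute \emph{strictly} with $\Psi$ and $\Psi_\bu$ on the nose, since $\Psi_\bu$ applied to $(\widetilde\cE|_{\P^1_\bu},\widetilde\psi|_{Y_\bu^h})$ literally returns $(\widetilde\cE|_{\P^1_\bu-Y_\bu},\widetilde\psi|_{\dot Y_\bu^h})$ rather than $(\cE|_{\P^1_\bu-Y_\bu},\psi|_{\dot Y_\bu^h})$; the identification comes via the natural isomorphism $\Psi\Theta\simeq\Id_\cB$ restricted to $\bu$. You handle this correctly in the body of the argument, so this is only a matter of wording in your first paragraph.
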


\begin{lemma}\cite{FP}
\label{coboundary_limit}
For any $(\cE_\bu,\psi_\bu)\in\cB_\bu$ and any $\beta_\bu\in\bG_\bu(Y_\bu^h)$ the $\bG_\bu$-bundles
\[
 \Gl_\bu(\cE_\bu,\psi_\bu)\ \text{and} \ \Gl_\bu(\cE_\bu,\psi_\bu\circ\beta_\bu|_{\dot Y_\bu^h})
\]
are isomorphic (here $\beta_\bu|_{\dot Y_\bu^h}$ is regarded as an automorphism of the $\bG_\bu$-bundle $\bG_\bu\times_\bu\dot Y_\bu^h$ given by the right translation by $\beta_\bu|_{\dot Y_\bu^h}$).
\end{lemma}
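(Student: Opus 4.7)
The plan is to exploit the fact that $\beta_\bu$ is defined on all of $Y_\bu^h$, not only on $\dot Y_\bu^h$. Right translation by $\beta_\bu$ therefore gives an automorphism of the trivial $\bG_\bu$-bundle $\bG_\bu\times_\bu Y_\bu^h$ over the whole henselization, which restricts on $\dot Y_\bu^h$ to the automorphism given by $\beta_\bu|_{\dot Y_\bu^h}$. Using the equivalence $\Psi_\bu$ of Proposition~\ref{pr:gluing2}, I will lift $(\cE_\bu,\psi_\bu)$ to an object of $\cA_\bu$, modify its trivialization by $\beta_\bu$ on the full $Y_\bu^h$, and observe that the underlying $\bG_\bu$-bundle on $\P^1_\bu$ is unaffected by this modification.

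More precisely, choose $(\cF,\psi):=\Theta_\bu(\cE_\bu,\psi_\bu)\in\cA_\bu$, so that
\[
  \Gl_\bu(\cE_\bu,\psi_\bu)=\Lambda_\bu(\cF,\psi)=\cF
\]
and there is an isomorphism $(\cF|_{\P^1_\bu-Y_\bu},\psi|_{\dot Y_\bu^h})\cong(\cE_\bu,\psi_\bu)$ in $\cB_\bu$. Set $\psi':=\psi\circ\beta_\bu$, which is still a trivialization of $\cF|_{Y_\bu^h}$; hence $(\cF,\psi')\in\cA_\bu$ is an object with the \emph{same} underlying bundle $\cF$. Restricting to $\P^1_\bu-Y_\bu$ and $\dot Y_\bu^h$ we find
\[
  \Psi_\bu(\cF,\psi')=(\cF|_{\P^1_\bu-Y_\bu},\,\psi|_{\dot Y_\bu^h}\circ\beta_\bu|_{\dot Y_\bu^h})\cong(\cE_\bu,\psi_\bu\circ\beta_\bu|_{\dot Y_\bu^h})
\]
in $\cB_\bu$. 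Because $\Psi_\bu$ is an equivalence of categories, we obtain an isomorphism $(\cF,\psi')\cong\Theta_\bu(\cE_\bu,\psi_\bu\circ\beta_\bu|_{\dot Y_\bu^h})$ in $\cA_\bu$, and applying the forgetful functor $\Lambda_\bu$ preserves isomorphisms, giving
\[
  \Gl_\bu(\cE_\bu,\psi_\bu\circ\beta_\bu|_{\dot Y_\bu^h})\cong\cF=\Gl_\bu(\cE_\bu,\psi_\bu),
\]
as desired.

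I do not anticipate any substantial obstacle: the lemma is a formal consequence of the gluing equivalence $\Psi_\bu$ combined with the fact that $\beta_\bu$ is globally defined on $Y_\bu^h$ and hence yields an automorphism of the trivial bundle there. The only subtlety worth flagging is that one should not try to change the gluing datum directly on $\dot Y_\bu^h$, but rather lift through $\Theta_\bu$ so as to use the extension of $\beta_\bu$ across the henselization; this is precisely the place where the hypothesis $\beta_\bu\in\bG_\bu(Y_\bu^h)$, rather than merely $\beta_\bu\in\bG_\bu(\dot Y_\bu^h)$, enters.
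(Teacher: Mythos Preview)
Your argument is correct. The paper does not give its own proof of this lemma; it simply cites it from \cite{FP}. Your proof is the natural one: since $\beta_\bu$ is defined on all of $Y_\bu^h$, right translation by $\beta_\bu$ is an automorphism of the trivial bundle over $Y_\bu^h$, so the pair $(\cF,\psi')=(\cF,\psi\circ\beta_\bu)$ lies in $\cA_\bu$ with the same underlying bundle $\cF$, and the equivalence $\Psi_\bu$ carries it to $(\cE_\bu,\psi_\bu\circ\beta_\bu|_{\dot Y_\bu^h})$. This is exactly the intended formal argument behind the citation, and there is nothing to add.
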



\subsection{Proof of Theorem~\ref{MainThm2}: presentation of $\cG$ in the form $\Gl(\cG',\phi)$}\label{sect:presentation}
Let $U$, $\bG$, $Z$, $Y$ and $\cG$ be as in Theorem~\ref{MainThm2}.

\begin{proposition}\cite{FP}
\label{presentation}
The $\bG$-bundle $\cG$ over $\P^1_U$ is of the form $\Gl(\cG',\phi)$ for the $\bG$-bundle $\cG':=\cG|_{\P^1_U-Y}$ and a
trivialization $\phi$ of $\cG'$ over $\dot Y^h$.
\end{proposition}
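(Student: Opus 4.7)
The plan is to construct a trivialization $\psi$ of $\cG$ over $Y^h$; once such a $\psi$ is in hand, the pair $(\cG,\psi)$ belongs to the category $\cA$, and by Proposition~\ref{pr:gluing2} together with Construction~\ref{equally_well} one obtains
\[
 \cG \;\cong\; \Lambda(\Theta(\Psi(\cG,\psi))) \;=\; \Gl(\cG|_{\P^1_U-Y},\,\psi|_{\dot Y^h}),
\]
which is the desired presentation with $\phi:=\psi|_{\dot Y^h}$. Thus the whole proof reduces to producing the trivialization $\psi$.

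To produce $\psi$, I would first restrict to the closed subscheme $Y$. By hypothesis (ii) of Theorem~\ref{MainThm2} we have $Y\cap Z=\emptyset$, so $Y\subset\P^1_U-Z$; since $\cG|_{\P^1_U-Z}$ is assumed trivial, the further restriction $\cG|_Y$ is a trivial $\bG_Y$-bundle, i.e.\ it admits a section $\sigma:Y\to \cG|_Y$ of the structure projection.

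The next step is to lift $\sigma$ along the canonical closed immersion $s:Y\hookrightarrow Y^h$ to a section $\widetilde\sigma:Y^h\to\cG|_{Y^h}$. Here I use two ingredients: first, by the very construction of $Y^h$ (recalled in Section~\ref{sect:distinguishedLimit} and the Appendix), the pair $(Y^h,s(Y))$ is henselian and $\pi^{-1}(Y)=s(Y)$; second, the total space of $\cG|_{Y^h}$ is smooth over $Y^h$, because $\bG$ is smooth over $U$ and $\cG|_{Y^h}$ is étale locally isomorphic to $\bG\times_U Y^h$. The standard lifting property for sections of smooth morphisms over henselian pairs then extends $\sigma$ to the required $\widetilde\sigma$; viewed equivariantly, $\widetilde\sigma$ is nothing but a trivialization $\psi$ of $\cG|_{Y^h}$.

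I do not anticipate any real obstacle: the only nontrivial input is the henselian lifting, which is a standard fact. What makes it directly applicable is precisely hypothesis (ii) of Theorem~\ref{MainThm2}, which places $Y$ in the open set where $\cG$ is already trivial and thereby supplies the closed-fibre section $\sigma$ that the henselian machinery can extend.
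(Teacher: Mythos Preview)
Your argument is correct and follows a genuinely different route from the paper's. Both proofs reduce, via Construction~\ref{equally_well}, to showing that $\cG|_{Y^h}$ is trivial, but they finish differently.

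The paper does not lift a section at all. Instead it enlarges $Z$ to a closed subscheme $Z'\subset\A^1_U$ with $Z\subset Z'$, $Z'\cap Y=\emptyset$, and $\A^1_U-Z'$ affine. Then $\A^1_U-Z'$ is an affine \'etale (in fact open) neighborhood of $Y$ in $\A^1_U$, so by property~(iii) of henselization (Section~\ref{sect:distinguishedLimit}) the henselization of $(\A^1_U-Z',Y)$ coincides with $(Y^h,\pi,s)$. Since $\cG$ is already trivial on $\A^1_U-Z'$, its pull-back to $Y^h$ is trivial for free.

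Your approach instead restricts to $Y$, uses $Y\cap Z=\emptyset$ to get a section of $\cG|_Y$, and then invokes the smooth lifting property of the henselian pair $(Y^h,s(Y))$. This is legitimate; the only small point you should make explicit is that the morphism $\cG|_{Y^h}\to Y^h$ is affine (hence the torsor is an affine scheme, since $Y^h$ is affine by Proposition~\ref{pr:affine}), because the standard henselian lifting lemma for smooth morphisms is stated for smooth $A$-algebras, i.e.\ affine smooth schemes over $\spec A$. Affineness of the torsor follows from affineness of $\bG$ over $U$.

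In summary: the paper trades the smooth-lifting lemma for property~(iii) of henselization plus the trivial observation that pull-back preserves triviality; you trade the construction of $Z'$ and the citation of property~(iii) for a direct application of henselian lifting. Both are short and standard; neither is clearly superior.
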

\begin{proof}
In view of Construction~\ref{equally_well},
it is enough to prove that the restriction of the principal $\bG$-bundle $\cG$ to $Y^h$ is trivial.
Let us choose a closed subscheme $Z'\subset\A^1_U$ such that $Z'$ contains $Z$, $Z'\cap Y=\emptyset$, and $\A^1_U-Z'$ is affine.
Then $\A^1_U-Z'$ is an affine neighborhood of $Y$. By the property (iii) from subsection
\ref{sect:distinguishedLimit}
the henselization of the pair $(\A^1_U-Z',Y)$ coincides with
the henselization of the pair $(\A^1_U,Y)$.
Since $\cG$ is trivial over $\A^1_U-Z'$, its pull-back to $Y^h$ is trivial too.
The proposition is proved.
\end{proof}

\emph{Our aim is to modify the trivialization $\phi$ via an element
\[
   \alpha\in\bG(\dot Y^h)
\]
so that the $\bG$-bundle $\Gl(\cG',\phi\circ\alpha)$ becomes trivial over} $\P^1_U$.

\subsection{Proof of Theorem~\ref{MainThm2}: proof of property~(i) from the outline}\label{sect:properties_i}
Now we are able to prove property (i) from the outline of the proof. In fact, we will prove the following
modification of
\cite[Lemma 5.11]{FP}.
\begin{lemma}\label{podpravka}
Let $\Gl(\cG',\phi)$ be the presentation of the $\bG$-bundle $\cG$ over~$\P^1_U$ given in
Proposition~\ref{presentation}.
Set $\phi_\bu:=\phi|_{\dot Y_\bu^h}$.
Then there is $\gamma_\bu\in\bG_\bu(\dot Y_\bu^h)$
such that the $\bG_\bu$-bundle
$\Gl_\bu(\cG'|_{\P^1_\bu-Y_\bu},\phi_\bu\circ\gamma_\bu)$
is trivial.
\end{lemma}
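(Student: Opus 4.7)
The plan is to reduce the lemma to the assertion that $\cG|_{\P^1_\bu - Y_\bu}$ is a trivial $\bG_\bu$-bundle, and then read off $\gamma_\bu$ as the unique transition element between the two trivializations of $\cG'|_{\dot Y_\bu^h}$ coming from $\phi_\bu$ and from a chosen global trivialization on $\P^1_\bu - Y_\bu$.

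The key step will be to prove, for each closed point $u_i$ of $U$, that the $\bG_{u_i}$-bundle $\cG|_{\P^1_{u_i}}$ is trivial. Since $k(u_i)$ is finite, Steinberg's theorem implies that the simple simply-connected group $\bG_{u_i}$ is quasi-split, and Lang's theorem gives $H^1_{\text{\'et}}(k(u_i),\bG_{u_i})=1$. Harder's theorem on principal bundles over $\P^1$ over a field then identifies $H^1_{\text{\'et}}(\P^1_{u_i},\bG_{u_i})$ with $H^1_{\text{\'et}}(k(u_i),\bG_{u_i})$, so $\cG|_{\P^1_{u_i}}$ is trivial. Restricting to $\P^1_{u_i}-Y_{u_i}$ and taking the disjoint union over $i$ shows that $\cG'|_{\P^1_\bu - Y_\bu}$ is trivial.

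With a trivialization $\tau\colon\bG_\bu\times_\bu(\P^1_\bu-Y_\bu)\xrightarrow\sim\cG'|_{\P^1_\bu-Y_\bu}$ fixed, the restriction $\tau|_{\dot Y_\bu^h}$ is a second trivialization of $\cG'|_{\dot Y_\bu^h}$ and hence differs from $\phi_\bu$ by a unique element $\gamma_\bu\in\bG_\bu(\dot Y_\bu^h)$ satisfying $\tau|_{\dot Y_\bu^h}=\phi_\bu\circ\gamma_\bu$. Consequently
\[
 \Gl_\bu(\cG'|_{\P^1_\bu-Y_\bu},\phi_\bu\circ\gamma_\bu)=\Gl_\bu(\cG'|_{\P^1_\bu-Y_\bu},\tau|_{\dot Y_\bu^h}),
\]
and via $\tau$ the pair on the right is isomorphic in $\cB_\bu$ to the trivial $\bG_\bu$-bundle on $\P^1_\bu-Y_\bu$ equipped with the identity trivialization on $\dot Y_\bu^h$. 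The gluing functor sends this pair to the trivial $\bG_\bu$-bundle on $\P^1_\bu$, which gives the lemma.

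The main obstacle is the triviality of $\cG|_{\P^1_{u_i}}$: this is where all the content lies, and the sketch above relies on the classical Harder--Steinberg--Lang package. An alternative route, staying closer to the hypotheses of Theorem~\ref{MainThm2}, would exploit the assumption $Pic(\P^1_u-Y_u)=0$ together with the fact that $\cG$ is already trivial on the disjoint open $\P^1_U-Z$, via a Nisnevich-type patching argument across the finite set $Z_u\subset\P^1_u-Y_u$; this avoids invoking Harder's theorem directly, but either way the crux is the vanishing of $H^1_{\text{\'et}}(\P^1_{u_i}-Y_{u_i},\bG_{u_i})$.
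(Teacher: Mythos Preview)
Your reduction of the lemma to the triviality of $\cG'|_{\P^1_\bu-Y_\bu}$ is correct, and once that is in hand your construction of $\gamma_\bu$ as the transition element between $\phi_\bu$ and a chosen global trivialization is exactly what the paper does (the paper simply writes $\gamma_\bu=\phi_\bu^{-1}$ after identifying $\phi_\bu$ with an element of $\bG_\bu(\dot Y_\bu^h)$ via the trivialization).

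The gap is in your main route to triviality. The assertion that ``Harder's theorem identifies $H^1_{\text{\'et}}(\P^1_{u_i},\bG_{u_i})$ with $H^1_{\text{\'et}}(k(u_i),\bG_{u_i})$'' is false: for any field $k$ and any nontrivial simply-connected simple group $G$ there are many nontrivial $G$-bundles on $\P^1_k$ (Grothendieck--Harder classifies them by dominant coweights; already for $G=\mathrm{SL}_2$ the bundle $\mathcal O(1)\oplus\mathcal O(-1)$ is nontrivial). Such bundles are even Zariski-locally trivial, hence trivial on the complement of a point, so being trivial on $\P^1_{u_i}-Z_{u_i}$ does not force triviality on $\P^1_{u_i}$. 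Thus your argument, as written, does not prove that $\cG|_{\P^1_{u_i}}$ is trivial, and in fact the paper never claims this at this stage.

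What the paper actually proves is the weaker statement you need, namely that $\cG_u$ is trivial on $\P^1_u-Y_u$, and for this it uses precisely the ingredients you list in your ``alternative route'': the hypothesis $\mathrm{Pic}(\P^1_u-Y_u)=0$, the fact that $\cG_u$ is already trivial on $\A^1_u-Z_u\subset\P^1_u-Y_u$, and \cite[Corollary~3.10(a)]{GilleTorseurs}. So the alternative you sketched is the correct argument; you should promote it to the main line and drop the appeal to a global classification of bundles on $\P^1$.
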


\begin{proof}
We show first that $\cG|_{\P^1_\bu-Y_\bu}$ is trivial. One has
\[
    \P^1_\bu= \coprod_{u\in\bu}\P^1_u
\]
For $u\in\bu$ set $Y_u:=Y\times_Uu$, $\bG_u:=\bG\times_Uu$, and $\cG_u:=\cG\times_Uu$.


Take $u\in\bu$. By our assumption on $Y$,
$Pic(\P^1_u-Y_u)=0$.
The $\bG_u$-bundle $\cG_u$ is trivial over $\A^1_u-Z_u$.
Thus, by~\cite[Corollary~3.10(a)]{GilleTorseurs},
it is trivial over $\P^1_u-Y_u$.

We see that
$\cG'|_{\P^1_\bu-Y_\bu}=\cG|_{\P^1_\bu-Y_\bu}$
is trivial.
Choosing a trivialization, we may identify $\phi_\bu$ with an element of
$\bG_\bu(\dot Y_\bu^h)$. Set $\gamma_\bu=\phi_\bu^{-1}$.
By the very choice of~$\gamma_\bu$ the $\bG_\bu$-bundle
$\Gl_\bu(\cG'|_{\P^1_\bu-Y_\bu},\phi_\bu\circ\gamma_\bu)$
is trivial.
\end{proof}


\subsection{Proof of Theorem~\ref{MainThm2}: reduction to property (ii) from the outline}\label{sect:properties_ii}
The aim of this section is to deduce Theorem~\ref{MainThm2} from the following
\begin{proposition}\cite{FP}
\label{alpha}
Each element $\gamma_\bu\in\bG_\bu(\dot Y_\bu^h)$ can be written in the form
\[
 \alpha|_{\dot Y_\bu^h}\cdot\beta_\bu|_{\dot Y_\bu^h}
\]
for certain elements $\alpha\in\bG(\dot Y^h)$ and $\beta_\bu\in\bG_\bu(Y_\bu^h)$.
\end{proposition}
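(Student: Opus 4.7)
The plan is to exploit that $\bG$ is simple simply-connected and quasi-split on $Y^h$, reducing the problem to lifting parameters of a uniform bounded-length decomposition of $\gamma_\bu$ into root-subgroup elements. First I would observe that the quasi-split structure on $Y$ extends uniquely to $Y^h$: the scheme of Borel subgroups of $\bG$ is smooth projective over $U$ and $(Y^h,s(Y))$ is a henselian pair, so a Borel on $Y$ extends to a Borel $\bB\subset\bG|_{Y^h}$ with maximal torus $\bT$, opposite Borel $\bB^-$, and unipotent radicals $\bU,\bU^-$. Each root subgroup $\bU_\alpha\subset\bG|_{Y^h}$ is, by the quasi-split structure, a Weil restriction of $\mathbb{G}_a$ along a finite \'etale cover of $Y^h$, so its sections over any affine $Y^h$-scheme form a free module.

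Because $Y$ is finite \'etale over the semi-local $U$, the closed subscheme $Y_\bu=Y\times_U\bu$ is a finite disjoint union of closed points with residue fields $k_{ij}$, and correspondingly $Y^h_\bu=\prod_{i,j}A^h_{ij}$ with each $A^h_{ij}$ a henselian discrete valuation ring and $\dot Y^h_\bu=\spec\bigl(\prod_{i,j}F_{ij}\bigr)$, where $F_{ij}$ is the fraction field of $A^h_{ij}$. Thus $\bG_\bu(\dot Y^h_\bu)=\prod_{i,j}\bG(F_{ij})$, and for each $(i,j)$ the group $\bG_{F_{ij}}$ is simple simply-connected quasi-split over a field, hence generated by its root subgroups. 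A Bruhat/Gauss-type argument, combined with the fact that any maximal torus element and any Weyl group representative is itself a bounded product of root-subgroup elements in a simply-connected group, yields a surjective multiplication map
\[
\bU_{\alpha_1}(F_{ij})\times\cdots\times\bU_{\alpha_N}(F_{ij})\longrightarrow\bG(F_{ij})
\]
for a fixed sequence of roots $\alpha_1,\dots,\alpha_N$ depending only on the root datum of $\bG$; since there are only finitely many components, per-component decompositions can be rearranged and padded with identity elements so that a single sequence works uniformly for all $(i,j)$.

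Writing $\gamma_{\bu,ij}=u_{ij,1}\cdots u_{ij,N}$ with $u_{ij,m}\in\bU_{\alpha_m}(F_{ij})$, I then lift the parameters: since $\dot Y^h_\bu=\dot Y^h\times_U\bu$, the map $\cO(\dot Y^h)\to\cO(\dot Y^h_\bu)=\prod_{i,j}F_{ij}$ is surjective, being the reduction by the ideal $J$ of $\bu$ in $R$. Because each $\bU_{\alpha_m}$ is a Weil restriction of $\mathbb{G}_a$, this surjectivity passes to $\bU_{\alpha_m}(\dot Y^h)\to\prod_{i,j}\bU_{\alpha_m}(F_{ij})$, allowing a simultaneous lift of each tuple $(u_{ij,m})_{i,j}$ to $\tilde u_m\in\bU_{\alpha_m}(\dot Y^h)$. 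Setting $\alpha:=\tilde u_1\cdots\tilde u_N\in\bG(\dot Y^h)$ gives $\alpha|_{\dot Y^h_\bu}=\gamma_\bu$, so one may take $\beta_\bu=1$.

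The hardest step will be establishing the uniformly bounded root-subgroup decomposition of $\bG(F_{ij})$. For simply-connected simple quasi-split groups over an arbitrary field this is classical (via Steinberg's presentation, the Chevalley commutator formulas, and the expression of each Weyl-group representative as a product of root-subgroup elements), but one has to be careful with small finite residue fields and to arrange a single fixed sequence of roots valid for every $F_{ij}$ at once; this is handled by concatenating the per-component sequences and padding the unused slots with the identity element of the corresponding root subgroup, which is possible because each $\bU_{\alpha_m}$ is a group scheme.
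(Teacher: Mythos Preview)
Your argument is correct and follows the same broad strategy as the paper --- exploit quasi-splitness over $Y$ to reduce to lifting unipotent data along the surjection $\cO(\dot Y^h)\twoheadrightarrow\cO(\dot Y_\bu^h)$ --- but the implementation differs in two respects worth noting.

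First, the paper does not extend the Borel to $Y^h$ by henselian lifting; instead it constructs a retraction $r:Y^h\to Y$ (Lemma~\ref{lm:retraction}) and views $Y^h$ as a $Y$-scheme, so the Borel $\bB^+\subset\bG_Y$ is simply pulled back. It then works with the full unipotent radicals $\bU^\pm$ rather than individual root subgroups, packaging them into the functor $\bE(T)=\langle\bU^+(T),\bU^-(T)\rangle$. This avoids your bookkeeping with a fixed sequence $\alpha_1,\dots,\alpha_N$ and the padding argument, and sidesteps the (minor) issue that different connected components of $Y$ may carry different relative root systems; the surjectivity of $\bE(\dot Y^h)\to\bE(\dot Y_\bu^h)$ then follows in one line from $\bU^\pm$ being vector bundles.

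Second, and more substantively, the paper does \emph{not} claim $\bG_\bu(\dot Y_\bu^h)=\bE(\dot Y_\bu^h)$. It proves instead (Lemma~\ref{nastia}) the coset decomposition $\bG_{y^h}(\dot y^h)=\bE(\dot y^h)\,\bG_{y^h}(y^h)$ over each henselian DVR $y^h$, citing Fait~4.3 and Lemme~4.5 of \cite{Gille:BourbakiTalk}. Your approach asserts the stronger statement that $\bG(F_{ij})$ is already generated by root subgroups (Kneser--Tits for quasi-split simply-connected groups), which lets you take $\beta_\bu=1$. That stronger input is indeed available --- Kneser--Tits is classical for quasi-split groups and is precisely the subject of \cite{Gille:BourbakiTalk} --- so your route is legitimate; the paper's version is slightly more economical in that it only needs the Iwasawa-type decomposition over a henselian DVR rather than the full Whitehead-group computation over the fraction field, and it yields the proposition exactly as stated rather than the sharper conclusion you obtain.
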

\begin{proof}[Deduction of Theorem~\ref{MainThm2} from Proposition~\ref{alpha}]\cite{FP}
Let $\Gl(\cG',\phi)$ be the presentation of the $\bG$-bundle $\cG$ from Proposition~\ref{presentation}.
Let $\gamma_\bu\in\bG_\bu(\dot Y_\bu^h)$ be the element from Lemma~\ref{podpravka}.
Let $\alpha\in\bG(\dot Y^h)$ and $\beta_\bu\in\bG_\bu(Y_\bu^h)$ be the elements from Proposition~\ref{alpha}. Set
\[
 \cG^{new}=\Gl(\cG',\phi\circ\alpha).
\]
\emph{Claim.} The $\bG$-bundle $\cG^{new}$ is trivial over $\P^1_U$.\\ Indeed, by Lemmas~\ref{basechange_limit} and~\ref{coboundary_limit} one has a chain of isomorphisms of $\bG_\bu$-bundles
\begin{multline*}
  \cG^{new}|_{\P^1_\bu}\cong
  \Gl_\bu(\cG'|_{\P^1_\bu-Y_\bu},\phi_\bu \circ \alpha|_{\dot Y_\bu^h})
\cong\\
\Gl_\bu(\cG'|_{\P^1_\bu-Y_\bu},\phi_\bu \circ
\alpha|_{\dot Y_\bu^h}\circ\beta_\bu|_{\dot Y_\bu^h})=
\Gl_\bu(\cG'|_{\P^1_\bu-Y_\bu}, \phi_\bu \circ \gamma_\bu),
\end{multline*}
which is trivial by the choice of $\gamma_\bu$.
The $\bG$-bundles $\cG|_{\P^1_U-Y}$ and $\cG^{new}|_{\P^1_U-Y}$ coincide
by the very construction of~$\cG^{new}$.
By Proposition~\ref{pr:trivclsdfbr},
applied to $T=Z\cup Y$, the $\bG$-bundle $\cG^{new}$ is trivial.
Whence the claim.

The claim above implies that the $\bG$-bundle $\cG|_{\P^1_U-Y}= \cG^{new}|_{\P^1_U-Y}$ is trivial.
Theorem~\ref{MainThm2} is proved.
\end{proof}

\subsection{End of proof of Theorem~\ref{MainThm2}: proof of property (ii) from the outline}
\emph{In the remaining part of Section~\ref{sect:proof2} we will prove Proposition~\ref{alpha}. This will complete the proof of Theorem~\ref{MainThm2}}.

By our assumption on $Y$, the group scheme $\bG_Y=\bG\times_UY$ is {\bf quasi-split}.
Thus we can and will choose a Borel subgroup scheme $\bB^+$ in $\bG_Y$.

Since $Y$ is an affine scheme,
by~\cite[Exp.~XXVI, Cor.~2.3, Th~4.3.2(a)]{SGA3-3}
there is an opposite to $\bB^+$ Borel subgroup scheme $\bB^-$ in $\bG_Y$. Let $\bU^+$ be the unipotent radical of
$\bB^+$, and let $\bU^-$ be the unipotent radical of $\bB^-$.

\begin{definition}\label{EYi}
We will write $\bE$ for the functor, sending a $Y$-scheme $T$ to the subgroup $\bE(T)$ of the group $\bG_Y(T)=\bG(T)$ generated by the subgroups $\bU^+(T)$ and $\bU^-(T)$ of the group $\bG_Y(T)=\bG(T)$.
\end{definition}

\begin{lemma}
\label{lm:surjectivity}
The functor $\bE$ has the property that for every closed subscheme $S$ in an affine $Y$-scheme $T$ the induced map $\bE(T)\to\bE(S)$ is surjective.
\end{lemma}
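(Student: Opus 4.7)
\medskip

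\noindent\textbf{Proof plan.} The strategy is to reduce the surjectivity for the generated subgroup $\bE$ to the analogous (much easier) statement for each of $\bU^+$ and $\bU^-$ separately, and then to deduce that statement from the fact that $\bU^\pm$ is, as a $Y$\emph{-scheme}, an affine space.

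First, I would record that because $\bG_Y$ is quasi-split with Borel $\bB^+\subset\bG_Y$ and opposite Borel $\bB^-$, the unipotent radicals $\bU^\pm$ are smooth affine $Y$-group schemes which admit a filtration by closed normal subgroup schemes whose successive quotients are isomorphic to $\mathbb G_{a,Y}$ (see \cite[Exp.~XXVI, \S2]{SGA3-3}). Since each successive extension is a torsor under a vector group over an affine base, it splits as a scheme (though not as a group), and one obtains by induction an isomorphism of $Y$-schemes $\bU^\pm\cong\A^{n_\pm}_Y$, where $n_\pm=\dim\bU^\pm$. In particular, for any affine $Y$-scheme $T$,
\[
  \bU^\pm(T)=\Mor_Y(T,\A^{n_\pm}_Y)\cong\cO(T)^{n_\pm}
\]
as sets.

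Next, I would use the elementary fact that if $S\hookrightarrow T$ is a closed embedding of affine schemes, then the restriction $\cO(T)\twoheadrightarrow\cO(S)$ is surjective. Combined with the previous step, this yields that
\[
  \bU^+(T)\longrightarrow\bU^+(S)\qquad\text{and}\qquad\bU^-(T)\longrightarrow\bU^-(S)
\]
are both surjective maps of sets (in fact of groups).

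Finally, given an element $g\in\bE(S)$, by the very definition of $\bE$ it can be written as a finite product $g=g_1g_2\cdots g_m$ with each factor $g_i\in\bU^{\epsilon_i}(S)$ for some sign $\epsilon_i\in\{+,-\}$. Using the surjectivity established in the previous paragraph, I lift each $g_i$ to an element $\tilde g_i\in\bU^{\epsilon_i}(T)$. Then $\tilde g:=\tilde g_1\tilde g_2\cdots\tilde g_m$ lies in $\bE(T)$ and restricts to $g$ on $S$, since restriction to $S$ is a group homomorphism compatible with the inclusions $\bU^\pm\hookrightarrow\bG$. This proves the lemma.

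\medskip

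\noindent\textbf{Main obstacle.} The only non-formal ingredient is the assertion that $\bU^\pm\cong\A^{n_\pm}_Y$ as $Y$-schemes; this is a standard consequence of the theory of reductive group schemes once a Borel (together with its opposite) is fixed, and since both $\bB^+$ and $\bB^-$ have already been chosen in the paragraph preceding the lemma, no further work is required on the side of $Y$. Everything else is a diagram chase using surjectivity of the coordinate algebra map for a closed subscheme of an affine scheme.
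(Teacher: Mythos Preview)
Your proof is correct and follows essentially the same route as the paper: the paper's one-line argument is that the restriction maps $\bU^\pm(T)\to\bU^\pm(S)$ are surjective because $\bU^\pm$ are isomorphic to vector bundles as $Y$-schemes (citing \cite[Exp.~XXVI, Cor.~2.5]{SGA3-3}), and the rest is implicit. Your version unpacks the same idea---surjectivity on $\bU^\pm$-points via the affine-space/vector-bundle structure, then lift each factor of a word in $\bE(S)$---and adds the elementary group-theoretic step explicitly; note that since $Y$ is semi-local the vector bundle is automatically trivial, so your description as $\A^{n_\pm}_Y$ is justified.
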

\begin{proof}
The restriction maps $\bU^\pm(T)\to\bU^\pm(S)$ are surjective, since $\bU^\pm$ are isomorphic to vector bundles as $Y$-schemes
(see~\cite[Exp.~XXVI, Cor.~2.5]{SGA3-3}).
\end{proof}

Recall that $(Y^h,\pi,s)$ is the henselization of the pair $(\A_U^1,Y)$. Also, $in:\A_U^1\to\P_U^1$ is the embedding. Denote the projection $\A_U^1\to U$ by $pr$ and the projection $\A_Y^1\to Y$ by $pr_Y$.
\begin{lemma}\cite{FP}
\label{lm:retraction}
There is a morphism $r:Y^h\to Y$ making the following diagram commutative
\begin{equation}
\label{eq:retraction}
\begin{CD}
Y^h @>r>> Y\\
@V{in\circ\pi}VV @VV{pr|_{Y}}V\\
\P^1_U @>pr>> U
\end{CD}
\end{equation}
and such that $r\circ s=\Id_Y$.
\end{lemma}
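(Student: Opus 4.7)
The plan is to construct $r$ by applying the defining property of the henselian pair $(Y^h, s(Y))$ to an affine étale cover built directly from $pr|_Y \colon Y \to U$. First, I would form the fibre product $P := Y \times_U Y^h$, where $Y$ maps to $U$ by $pr|_Y$ and $Y^h$ maps to $U$ by $pr \circ in \circ \pi$. Since $pr|_Y$ is finite étale, its base change $q \colon P \to Y^h$ is finite étale; in particular it is affine and étale, which is exactly the class of morphisms to which the henselization property applies.

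Next, I would exhibit a section of $q$ on the closed subscheme $s(Y) \subset Y^h$. Pulling $P$ back along $s$ identifies $P \times_{Y^h} s(Y)$ with $Y \times_U Y$, because $pr \circ in \circ \pi \circ s$ is by the very definition of $(Y^h,\pi,s)$ the original closed embedding $Y \hookrightarrow \A^1_U$, whose composition with $pr$ is $pr|_Y$. On $Y \times_U Y$ the diagonal $\Delta_Y$ is a canonical section of the second projection, and this produces a section $\sigma_0$ of $q$ over $s(Y)$.

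Because $(Y^h,s(Y))$ is henselian and $q$ is affine étale, $\sigma_0$ extends uniquely to a global section $\sigma \colon Y^h \to P$ of $q$. Setting $r := p_Y \circ \sigma$, where $p_Y \colon P \to Y$ is the first projection, yields the required morphism. The identity $r \circ s = \Id_Y$ follows from $\sigma \circ s = \Delta_Y$ combined with $p_Y \circ \Delta_Y = \Id_Y$, and the commutativity of diagram~(\ref{eq:retraction}) follows from the fibre product identity $pr|_Y \circ p_Y = pr \circ in \circ \pi \circ q$ and $q \circ \sigma = \Id_{Y^h}$.

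There is essentially no serious obstacle here: the whole argument is forced once one observes that $Y$ itself, being finite étale over $U$, furnishes a tautological affine étale cover of $Y^h$ whose restriction to $s(Y)$ comes equipped with the diagonal section. The only step that requires citing the theory recalled in Section~\ref{sect:distinguishedLimit} is the unique extension of $\sigma_0$ to $\sigma$, which is a direct application of the henselian property of the pair $(Y^h,s(Y))$.
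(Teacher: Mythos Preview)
The paper does not prove this lemma; it simply attributes it to~\cite{FP}. Your argument is correct. The appearance of $pr_Y:\A^1_Y\to Y$ in the sentence just preceding the lemma suggests that the intended proof is phrased via property~(iii) of Section~\ref{sect:distinguishedLimit}: since $Y\to U$ is finite \'etale, the projection $\A^1_Y=\A^1_U\times_U Y\to\A^1_U$ is an affine \'etale neighbourhood of $Y$ (embedded diagonally via $Y\hookrightarrow Y\times_U Y\subset\A^1_U\times_U Y$), so the henselization of $(\A^1_Y,Y)$ is canonically identified with $Y^h$, and composing its structure map $Y^h\to\A^1_Y$ with $pr_Y$ gives $r$. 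This is the same mathematics as your fibre-product argument, since property~(iii) is itself deduced from the section-lifting characterization of henselian pairs that you invoke directly; your formulation is arguably cleaner because it avoids introducing $\A^1_Y$ at all. One minor wording slip: you wrote that ``$pr\circ in\circ\pi\circ s$ is \ldots\ the original closed embedding $Y\hookrightarrow\A^1_U$'', but what you mean (and use) is that $\pi\circ s$ is this embedding, whence $pr\circ in\circ\pi\circ s=pr|_Y$.
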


We view $Y^h$ as a $Y$-scheme via $r$. Thus various subschemes of $Y^h$ also become $Y$-schemes. In particular, $\dot Y^h$ and $\dot Y_\bu^h$ are $Y$-schemes, and we can consider
\[
\bE(\dot Y^h)\subset\bG(\dot Y^h) \ \text{ and } \ \bE(\dot Y_\bu^h)\subset\bG(\dot Y_\bu^h)=\bG_\bu(\dot Y_\bu^h).
\]

\begin{lemma}\label{nastia}
\[
    \bG_\bu(\dot Y_\bu^h)=\bE(\dot Y_\bu^h)\bG_\bu(Y_\bu^h).
\]
\end{lemma}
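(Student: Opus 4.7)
The plan has two steps: first reduce the assertion to a pointwise factorization on each connected component of $\dot Y_\bu^h$, and then prove that factorization using an Iwasawa-type decomposition together with the simply-connectedness of $\bG$.

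For the reduction step, the hypothesis that $Y\to U$ is finite \'etale, together with properties~(i)--(ii) of henselization from Section~\ref{sect:distinguishedLimit}, yields
\[
Y_\bu^h \;=\; \coprod_v \spec A_v, \qquad \dot Y_\bu^h \;=\; \coprod_v \spec K_v,
\]
indexed by the (finitely many) closed points $v$ of $Y_\bu$, where each $A_v$ is a henselian discrete valuation ring with fraction field $K_v$. An element $\gamma_\bu \in \bG_\bu(\dot Y_\bu^h)$ is then a tuple $(\gamma_v)$. Since $\bU^+$ and $\bU^-$ commute with disjoint unions of test schemes, so does the functor $\bE$; therefore it suffices to prove, for each $v$ separately,
\[
\bG(K_v) \;=\; \bE(K_v)\cdot \bG(A_v).
\]

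Fix $v$ and a uniformizer $\pi_v \in A_v$. The Borels $\bB^\pm$ restrict to opposite Borel subgroup schemes of $\bG_{A_v}$ sharing a common maximal torus $\bT = \bB^+ \cap \bB^-$. The Iwasawa decomposition for the quasi-split reductive group $\bG_{K_v}$ over the henselian local field $K_v$ gives
\[
\bG(K_v) \;=\; \bB^-(K_v)\cdot \bG(A_v) \;=\; \bU^-(K_v)\cdot \bT(K_v)\cdot \bG(A_v),
\]
so any $\gamma_v$ has the form $\gamma_v = u^- \cdot t \cdot \beta$ with $u^- \in \bU^-(K_v) \subset \bE(K_v)$, $t \in \bT(K_v)$, $\beta \in \bG(A_v)$. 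Because $\bG$ is simply-connected, the cocharacter lattice of $\bT$ is generated by coroots, so modulo $\bT(A_v)$ we may write $t \equiv \prod_i \alpha_i^\vee(\pi_v^{n_i})$. Each factor $\alpha_i^\vee(\pi_v^{n_i})$ lies in the $SL_2$-subgroup attached to the root $\alpha_i$ (or a Weil-restricted variant of it in the non-split case), and by the Whitehead lemma for $SL_2$ over a field it is a product of upper- and lower-triangular unipotents and hence lies in $\bE(K_v)$. Thus $t = e' \cdot t_0$ with $e' \in \bE(K_v)$ and $t_0 \in \bT(A_v) \subset \bG(A_v)$, so
\[
\gamma_v \;=\; (u^- e')\cdot (t_0\beta) \;\in\; \bE(K_v)\cdot \bG(A_v).
\]

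The main obstacle is the quasi-split (but possibly non-split) case: both Iwasawa and the coroot-by-coroot torus reduction are cleanest for split groups, and for non-split $\bT$ one needs to be careful with the shape of the root subgroups. The natural remedy is to pass to the unramified Galois extension $A_v \to \widetilde A_v$ that splits $\bT$ (which exists since $A_v$ is henselian and $\bT$ is of finite type), carry out the splitting argument there, and descend; alternatively, the same conclusion follows from the Kneser--Tits theorem for quasi-split simply-connected groups over the infinite field $K_v$, which gives $\bG(K_v) = \bE(K_v)$ directly and permits the choice $\beta_v = 1$. The elementary Iwasawa approach above is, however, the one that matches the exposition style used in \cite{FP}.
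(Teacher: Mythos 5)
Your reduction to connected components of $\dot Y_\bu^h$ via property~(ii) of henselization is exactly the paper's first step; both arguments reduce the claim to proving, for each closed point $y$ of $Y_\bu$ over $u\in\bu$, that $\bG(L)=\bE(L)\,\bG(\cO)$ where $\cO=k(u)[t]^h_{\fm_y}$ is a henselian discrete valuation ring and $L$ its fraction field. Where you differ is in how this rank-one statement is handled: the paper simply cites Fait~4.3 and Lemma~4.5 of Gille's Bourbaki talk \cite{Gille:BourbakiTalk}, while you unfold that citation into an explicit Iwasawa-plus-coroots argument. As you yourself flag, the step ``$t\equiv\prod_i\alpha_i^\vee(\pi_v^{n_i})$ modulo $\bT(A_v)$'' is only literal when $\bT$ splits over $K_v$; in the genuinely quasi-split case one must argue on the maximal $K_v$-split subtorus and the relative coroots, or handle $\bT(K_v)/\bT(A_v)$ via the N\'eron model of an unramified torus. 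Of your two proposed remedies, the clean one is the appeal to the Kneser--Tits theorem for quasi-split simply-connected groups, which holds over any field and gives $\bG(K_v)=\bE(K_v)$ outright, rendering the Iwasawa step unnecessary; the ``split over an unramified extension and descend'' sketch is more delicate, since $\bE(K_v)$ is not visibly obtained by Galois descent from the elementary subgroup over the extension. In short, your proof is the same route as the paper's, with Gille's cited lemmas spelled out; it is correct once the quasi-split subtlety is discharged through Kneser--Tits rather than the coroot-by-coroot reduction.
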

\begin{proof}
Firstly, one has $Y_\bu=\coprod_{u\in\bu}\coprod_{y\in Y_u}y$. (Note that $Y_u$ is a finite scheme.) Thus by property (ii) of henselization, we have
\[
    Y_\bu^h=\coprod_{u\in\bu}\coprod_{y\in Y_u}y^h,\qquad
    \dot Y_\bu^h=\coprod_{u\in\bu}\coprod_{y\in Y_u}\dot y^h,
\]
where $(y^h,\pi_y,s_y)$ is the henselization of the pair
$(\A_u^1,y)$,
$\dot y^h:=y^h-s_y(y)$.
We see that $y^h$ and $\dot y^h$ are subschemes of $Y^h$, so we can view them as $Y$-schemes, and
$\bG_{y^h}:=\bG_Y\times_Y{y^h}$ is {\bf quasi-split}.
Also, $\bE(\dot y^h)$ makes sense as a subgroup of
$\bG(\dot y^h)=\bG_u(\dot y^h)=\bG_{y^h}(\dot y^h)$.

One has
\[
\begin{split}
   \bG_\bu(\dot Y_\bu^h)&=\prod_{u\in\bu}\prod_{y\in Y_u}\bG_u(\dot y^h)=\prod_{u\in\bu}\prod_{y\in Y_u}\bG_{y^h}(\dot y^h),\\
   \bE(\dot Y_\bu^h)&=\prod_{u\in\bu}\prod_{y\in Y_u}\bE(\dot y^h),\\
   \bG_\bu(Y_\bu^h)&=\prod_{u\in\bu}\prod_{y\in Y_u}\bG_u(y^h)=\prod_{u\in\bu}\prod_{y\in Y_u}\bG_{y^h}(y^h).
\end{split}
\]
Thus it suffices for each $u\in\bu$ and each $y\in Y_u$ to check the equality
\[
   \bG_{y^h}(\dot y^h)=\bE(\dot y^h)\bG_{y^h}(y^h).
\]
This equality holds by Fait 4.3 and Lemma 4.5 of~\cite{Gille:BourbakiTalk}.
In fact, $y^h=\spec\mathcal O$,
where $\mathcal O=k(u)[t]_{\mathfrak m_y}^h$ is a henselian discrete valuation ring,
and $\mathfrak m_y\subset k(u)[t]$ is the maximal ideal defining the point $y\in\A^1_u$.
Further, $\dot y^h=\spec L$, where $L$ is the fraction field of $\mathcal O$.
The lemma is proved.
\end{proof}

We have the closed embedding
(\ref{dotYdotYh})
and the scheme
$\dot Y^h$
is affine by Proposition~\ref{pr:affine}.
Recall that we regard
$\dot Y^h$ as a $Y$-scheme via the morphism
$r|_{\dot Y^h}$.
Thus
by Lemma~\ref{lm:surjectivity}
the restriction map $\bE(\dot Y^h)\to\bE(\dot Y_\bu^h)$ is surjective.
Since $\bE(\dot Y^h)\subset\bG(\dot Y^h)$, the proposition
\ref{alpha}
follows. \emph{This completes the proof of Theorem~\ref{MainThm2}}.

\section{An application}\label{sect:application}
The following result is a straightforward consequence of Theorem~\ref{MainThm1} and an exact sequence for \'etale cohomology.
Recall that by our definition a reductive group scheme has geometrically connected fibres.
\begin{theorem}\label{Norms}
Let $R$ be as in Theorem~\ref{MainThm1} and $\bG$ be a reductive $R$-group scheme. Let $\mu:\bG\to\bT$ be a group scheme morphism to an $R$-torus $\bT$ such that $\mu$ is locally in the \'{e}tale topology on $\spec R$ surjective. Assume further that the $R$-group scheme $\bH:=\Ker(\mu)$ is reductive. Let $K$ be the fraction field of $R$. Then the group homomorphism
\[
\bT(R)/\mu(\bG(R))\to\bT(K)/\mu(\bG(K))
\]
is injective.
\end{theorem}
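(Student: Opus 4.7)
The plan is to deduce Theorem~\ref{Norms} from Theorem~\ref{MainThm1} applied to the reductive $R$-group scheme $\bH$, via the long exact sequence in non-abelian \'etale cohomology attached to
\[
1 \to \bH \to \bG \xrightarrow{\mu} \bT \to 1.
\]
Since $\bH=\Ker(\mu)$ by hypothesis and $\mu$ is surjective \'etale-locally on $\spec R$, this is a short exact sequence of sheaves of groups for the \'etale topology on $\spec R$ (and similarly on $\spec K$). The subgroup $\bH$ is normal in $\bG$, being the kernel of a homomorphism, and $\bT$ is abelian, so $\mu(\bG(R))$ is an honest subgroup of $\bT(R)$ and the quotient $\bT(R)/\mu(\bG(R))$ makes sense as an abelian group; the same for $K$.

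First I would write down the usual six-term exact sequences of pointed sets
\[
\bG(R)\xrightarrow{\mu_*}\bT(R)\xrightarrow{\delta_R}H^1_{\text{\'et}}(R,\bH)\to H^1_{\text{\'et}}(R,\bG)
\]
and the analogue over $K$, linked by a commutative diagram coming from the inclusion $R\hookrightarrow K$. The connecting map $\delta_R$ sends $t\in\bT(R)$ to the class of the left $\bH$-torsor $\mu^{-1}(t)\to\spec R$; this is well defined even though $\bH$ is not assumed central. Exactness at $\bT(R)$ translates into: $t\in\mu(\bG(R))$ if and only if $\delta_R(t)=1$ in $H^1_{\text{\'et}}(R,\bH)$.

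Now, take $t\in\bT(R)$ whose class in $\bT(K)/\mu(\bG(K))$ is trivial; then $t|_K=\mu(g_K)$ for some $g_K\in\bG(K)$, so by exactness over $K$ and commutativity of the diagram,
\[
\delta_R(t)|_K=\delta_K(t|_K)=1 \quad \text{in}\ H^1_{\text{\'et}}(K,\bH).
\]
Since $\bH$ is a reductive $R$-group scheme by hypothesis and $R$ is a regular local ring containing a finite field as in Theorem~\ref{MainThm1}, that theorem yields that the kernel of $H^1_{\text{\'et}}(R,\bH)\to H^1_{\text{\'et}}(K,\bH)$ is trivial. Hence $\delta_R(t)=1$, and exactness at $\bT(R)$ gives $t\in\mu(\bG(R))$, which is precisely the injectivity to be proved.

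There is essentially no hard step once the setup is in place: the only things requiring care are the short exactness of the sheaf sequence in the \'etale topology, which follows from the two hypotheses on $\mu$, and the availability of the boundary map $\delta$ with the stated exactness for the not-necessarily-central normal subgroup $\bH$, which is standard. The substantive content is the input from Theorem~\ref{MainThm1} applied to $\bH$, whose hypotheses are guaranteed by the assumption that the kernel is reductive.
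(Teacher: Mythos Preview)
Your argument is correct and is exactly the ``straightforward consequence of Theorem~\ref{MainThm1} and an exact sequence for \'etale cohomology'' that the paper invokes in lieu of a detailed proof. The only cosmetic slip is that $R$ is semi-local (not just local) in Theorem~\ref{MainThm1}, but this changes nothing in your argument.
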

This theorem extends all the known results of this form proven in~\cite{C-TO}, \cite{PS}, \cite{Z}, \cite{OPZ}.


\appendix
\section{\cite{FP}}
For a commutative ring $A$ we denote by $\Rad(A)$ its Jacobson ideal. The following definition one can find in~\cite[Section 0]{Gabber}.
\begin{definition}
If $I$ is an ideal in a commutative ring $A$, then the pair $(A,I)$ is called \emph{henselian\/}, if $I\subset\Rad(A)$ and for every two relatively prime monic polynomials $\bar g, \bar h\in\bar A[t]$, where $\bar A=A/I$, and monic lifting $f\in A[t]$ of $\bar g\bar h$, there exist monic liftings $g,h\in A[t]$ such that $f=gh$. (Two polynomials are called relatively prime, if they generate the unit ideal.)
\end{definition}

\begin{lemma}\cite{FP}
\label{basechhens}
Let $(A,I)$ be a henselian pair with a semi-local ring $A$ and $J\subset A$ be an ideal. Then the pair $(A/J,(I+J)/J)$ is henselian.
\end{lemma}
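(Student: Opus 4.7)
The plan is to verify both parts of Gabber's definition for the pair $(A/J,(I+J)/J)$; set $\bar A=A/J$ and $\bar I=(I+J)/J$. The containment $\bar I\subseteq\Rad(\bar A)$ is immediate: every maximal ideal of $\bar A$ has the form $\mathfrak m/J$ for a maximal ideal $\mathfrak m\supseteq J$ of $A$, and each such $\mathfrak m$ contains $I\subseteq\Rad(A)$, hence contains $I+J$.

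For the factorization property I would convert the problem into an idempotent-lifting problem. Given coprime monic $\bar g,\bar h\in(A/(I+J))[t]$ and a monic lift $f'\in\bar A[t]$ of $\bar g\bar h$, form the finite $A$-algebra $C':=\bar A[t]/(f')$. Reducing modulo $I$ and applying the Chinese remainder theorem (via the coprimality of $\bar g$ and $\bar h$) gives
\[
    C'/IC'\cong(A/(I+J))[t]/(\bar g)\times(A/(I+J))[t]/(\bar h),
\]
so there is an idempotent $e\in C'/IC'$ projecting onto the first factor. Now I would invoke the standard equivalent characterization of henselian pairs: if $(A,I)$ is henselian then idempotents lift from $B/IB$ to $B$ for every finite $A$-algebra $B$ (see e.g.\ \cite{Gabber}). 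Applied to $B=C'$, this produces an idempotent $\hat e\in C'$ lifting $e$, and hence a decomposition $C'=C'_1\times C'_2$ of finite $\bar A$-algebras whose reductions modulo $\bar I$ are $(A/(I+J))[t]/(\bar g)$ and $(A/(I+J))[t]/(\bar h)$ respectively.

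To extract the desired factorization of $f'$, I observe that each $C'_i$ is a finitely generated projective $\bar A$-module as a direct summand of the free module $C'$. Since $\bar I\subseteq\Rad(\bar A)$, the rank of $C'_i$ at every maximal ideal of $\bar A$ coincides with its rank modulo $\bar I$, namely $\deg\bar g$ for $i=1$ and $\deg\bar h$ for $i=2$. The semi-locality of $\bar A$ then forces each $C'_i$ to be free of that constant rank. Finally, taking $g'$ and $h'$ to be the characteristic polynomials of multiplication by $t$ on $C'_1$ and $C'_2$ respectively produces monic polynomials in $\bar A[t]$ lifting $\bar g$ and $\bar h$ whose product is the characteristic polynomial of $t$ on $C'$, namely $f'$ itself. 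The main obstacle is the idempotent-lifting step, which rests on a nontrivial equivalent characterization of henselian pairs; the rest is routine once the product decomposition of $C'$ is in hand, and the semi-locality of $A$ enters only at the very end to convert constant-rank projectives into free modules.
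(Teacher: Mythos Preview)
The paper does not actually prove this lemma; it is stated in the Appendix with a citation to \cite{FP} and no argument is given. So there is no ``paper's proof'' to compare against, and your proposal must be judged on its own merits.

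Your argument is correct. The radical containment is immediate, and the factorization step is sound: $C'=\bar A[t]/(f')$ is free of rank $\deg f'$ over $\bar A$, hence finite over $A$; the idempotent-lifting characterization of henselian pairs (valid for arbitrary pairs, not just semi-local ones) produces the splitting $C'=C'_1\times C'_2$; semi-locality of $\bar A$ turns the constant-rank projectives $C'_i$ into free modules; and the characteristic-polynomial trick recovers the monic factorization $f'=g'h'$ with the right reductions.

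One remark: your route is slightly roundabout. Having invoked the idempotent-lifting characterization of henselian pairs, you could finish in one line by using it again as a \emph{criterion} rather than reverting to Gabber's polynomial definition. Namely, any finite $\bar A$-algebra $B$ is also a finite $A$-algebra, and $\bar I B = IB$ since $J$ kills $B$; so idempotents of $B/\bar I B=B/IB$ lift to $B$ because $(A,I)$ is henselian, and this is exactly the idempotent-lifting criterion for $(\bar A,\bar I)$. This avoids the characteristic-polynomial computation entirely and shows that the semi-locality hypothesis is not actually needed for the lemma.
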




The following definition one can find in~\cite[Section 0]{Gabber}.
\begin{definition}\label{def:henzelisation}
The henselization of any pair $(A,I)$ is the pair $(A_I^h,I^h)$ (over $(A,I)$) defined as follows
\[
(A_I^h,I^h):=\text{the filtered inductive limit over the category $\mathcal N$ of }(A',\Ker(\sigma)),
\]
where $\mathcal N$ is the filtered category of pairs $(A',\sigma)$ such that $A'$ is an \'{e}tale $A$-algebra and $\sigma\in\text{Hom}_{A-alg}(A',A/I)$.
\end{definition}


\end{document}